\newtheorem{theorem}{Theorem} 
\newtheorem{corollary}{Corollary}
\newtheorem{lemma}{Lemma} 
\newtheorem{proposition}{Proposition} 
\theoremstyle{definition}
\newtheorem{remark}{Remark}  
\newcommand{\E}{\mathbb{E}}
\newcommand{\R}{\mathbb{R}}
\renewcommand{\P}{\mathbb{P}}
\begin{document}

\title{Moment estimation in paired comparison models with a growing number of subjects}

\author{
Qiuping Wang\thanks{School of Mathematics and Statistics, Zhaoqing University, Zhaoqing, 526000, China.
\texttt{Email:} qp.wang@mails.ccnu.edu.cn}
\hspace{4mm}
Lu Pan\thanks{School of Mathematics and Statistics, Shangqiu Normal University, Shangqiu, 476000, China.
\texttt{Email:} panlulu@mails.ccnu.edu.cn}
\hspace{4mm}
Ting Yan\thanks{Department of Statistics, Central China Normal University, Wuhan, 430079, China.
\texttt{Email:} tingyanty@mail.ccnu.edu.cn}
}
\date{}

\maketitle

\begin{abstract}
When the number of subjects, $n$, is large, paired comparisons are often sparse.
Here, we study statistical inference in a class of paired comparison models parameterized by a set of merit parameters, under an
Erd\"{o}s--R\'{e}nyi comparison graph, where the sparsity is measured by a probability $p_n$ tending to zero.
We use the moment estimation base on the scores of subjects to infer the merit parameters.
We establish a unified theoretical framework in which the uniform consistency and asymptotic normality of the moment estimator hold
as the number of subjects goes to infinity.
A key idea for the proof of the consistency is that we obtain the convergence rate of the Newton iterative sequence for solving the estimator.
We use the Thurstone model to illustrate the unified theoretical results.
Further extensions to a fixed sparse comparison graph are also provided.
Numerical studies and a real data analysis illustrate our theoretical findings.

\vskip 5 pt \noindent
\textbf{Key words}: Consistency, Moment estimation, Paired comparison, Sparse. \\

{\noindent \bf Mathematics Subject Classification:} 60F05, 62J15, 62F12, 62E20.
\end{abstract}

\vskip 5pt

\section{Introduction}

Subjects are repeatedly compared in pairs in a wide spectrum of situations,
including sports games [\cite{han2023general}], ranking of scientific journals [\cite{stigler1994citation, Varin-2016-jrsa}], the quality of product brands [\cite{radlinski2007active}] and crowdsourcing [\cite{chen2016overcoming}]. For instances, one team plays with another team in basketballs;
papers in one journal cite papers in another journal;
one consumer choose one product over another; workers in a crowdsourcing setup are asked to compare pairs of items. 

One of fundamental problems is to give a ranking of all subjects based on the observed paired comparison data.
Since there is generally no natural ranking of subjects but for the round-robin tournaments,
paired comparison models have been developed to address this issue; see the book by \cite{David1988}.
Statistical models not only provide a method of ranking all subjects, but also are tools for making inference
on the merits of subjects (e.g., testing whether two subjects have the same merits).

Here, we are concerned with a class of paired comparison models that assign one merit parameter to each subject and
assume that the win-loss probability of any pair only depends on the difference of their merit parameters. Specifically,
the probability of subject $i$ winning $j$ is
\begin{equation}
\label{model}
\P( i \mbox{~wins~} j) = F( \beta_i - \beta_j),~~i,j=0, 1, \ldots, n; i\neq j,
\end{equation}
where $F$ is  a known cumulative distribution function satisfying $F(x)=1-F(-x)$, $\beta_i$ is the merit
parameter of subject $i$  and $n+1$ is the total number of subjects.
The well-known Bradley-Terry model [\cite{bradley-terry1952}], which dates back to at least 1929 [\cite{zermelo1929berechnung}], and the Thurstone model [\cite{thurstone1927a}] are two special cases of model \eqref{model}.
The former postulates the logistic distribution of $F(x)$ while
the latter postulates the normal distribution.

In the standard setting that $n$ is fixed and the number of comparisons in each pair goes to infinity,
the theoretical properties of the model \eqref{model} have been widely investigated in Chapter 4 of \cite{David1988}.
In the opposite scenario that $n$ goes to infinity and each pair has a fixed number of comparisons, \cite{simons-yao1999} proved the uniform
consistency and asymptotic normality of the maximum likelihood estimator (MLE) in the Bradley-Terry model.

When the number of subjects is large, paired comparisons are often sparse.
Taking the NCAA Division I FBS (Football Bowl Subdivision) regular season for example, a team plays with at most $14$ other teams among a total of $120$ teams.
The observed comparisons can be represented in a comparison graph with $n+1$ nodes denoting subjects and
a weighted edge between two nodes  denoting the number of comparisons.
The Erd\"{o}s--R\'{e}nyi comparison graph has been widely considered in literature [e.g., \cite{chen2015-confernece, Nihar2018, chen2019aoap, han2023general}], where
the number of comparisons between any two subjects follows a binomial distribution $(T, p_n)$ and $p_n$ measures the sparsity.
Under a very weak condition on the sparsity on $p_n$,
\cite{chen2019aoap} established the uniform consistency and asymptotic normality of the MLE in the Bradley--Terry model by extending the proof strategies in  \cite{simons-yao1999}.
Moreover, \cite{yan2012sparse} considered a fixed sparse comparison graph by controlling the length from one
subject to another subject with $2$ or $3$, in which the consistency and
asymptotic normality of the MLE also hold.
Inference in the high-dimensional setting under the Bradley-Terry model and some generalized versions has also attracted great interests in machine learning literature;
the upper bounds of various errors are established under different conditions [e.g.,  the $\ell_1$ error $\|\hat{\beta}-\beta \|_1$ in \cite{Spectral-Ranking-2018} and \cite{hendrickx2019graph},
the mean square error in \cite{vojnovic2017parameter}, the bias $\E\|\hat{\beta}-\beta \|_\infty$ in \cite{wang2019stretching}].
Under the assumption that the log-likelihood function is strictly convex, \cite{han2023general}
establish the uniform consistency of the MLE in general paired comparison models.
However, moment estimation with sparse comparisons has not yet explored.

The main contributions of this paper are as follows.
First, we develop the moment estimation, instead of the maximum likelihood estimation,  base on the scores of subjects (i.e., the number of wins) to estimate the merit parameters in model \eqref{model}.
The reason why we use the moment estimation is that it is natural to rank subjects according to their scores and the computation based on moment equations is simpler.
When $F(\cdot)$ belongs to the exponential family distribution, both estimations are identical.
Second, under an Erd\"{o}s--R\'{e}nyi comparison graph,
we establish a unified theoretical framework, in which the uniform consistency and asymptotic normality of the moment estimator hold
when $n$ goes to infinity and $p_n$ tends to zero.
A key idea for the proof of the consistency is that we obtain the convergence rate of the Newton iterative sequence for solving the estimator.
The asymptotic normality is proved by applying Taylor's expansions to a series of functions constructed from estimating equations and showing that
remainder terms in the expansions are  asymptotically neglect.
Although each pair of subjects is assumed to  have a
comparison with the same probability $p_n$, our proof strategy can be easily extended
to the case with different comparison probabilities at the order of $p_n$.
Third, we use the Thurstone model to illustrate the unified theoretical results.
Further extensions to a fixed sparse comparison graph in \cite{yan2012sparse} are also derived.
Numerical studies and a real data analysis illustrate our theoretical findings.

The rest of the paper is organized as follows. In Section \ref{section:model}, we present the moment estimation.
In Section \ref{section:asymptotic}, we present the consistency and asymptotic normality of the moment estimator.
We illustrate our unified results by one application in Section \ref{section:application}.
We extend the asymptotic results to a fixed comparison graph in Section \ref{section-fixed}.
In Section \ref{section:simulation}, we carry out simulations and give a real data analysis.
We give the summary and further discussion in Section \ref{section:sd}.
The proofs of the main results are relegated to Section \ref{section:appendix}.
The proofs of supported lemmas are relegated to the supplementary material.

\section{Moment estimation}
\label{section:model}

Assume that $n+1$ subjects that are labeled as ``$0, \ldots, n$", are compared in pairs repeatedly.
Let $t_{ij}$ be the times that subject $i$ compares with subject $j$ and $a_{ij}$ be the times
that subject $i$ wins subject $j$ out of $t_{ij}$ comparisons. As a result, $a_{ij} + a_{ji} = t_{ij}$.
By convention, define $t_{ii}=0$ and $a_{ii}=0$.
The comparison matrix $(t_{ij})_{(n+1)\times (n+1)}$ is generated from an Erd\"{o}s--R\'{e}nyi comparison graph,
where $t_{ij}$ follows a binomial distribution $Bin(T, p_n)$ with $p_n$ measuring the sparsity of comparisons.
More generally, $t_{ij}\sim Bin(T_{ij}, p_n)$. We set $T_{ij}$ to be the same for easy of exposition.
Recall that $\beta_0, \ldots, \beta_n$ are the merit parameters of subjects $0, \ldots, n$.
The probability in the model \eqref{model} implies that the winning probability only depends on the difference of merits of two subjects.
For the identification of models, we normalize $\beta_i, i=0,1,\ldots, n$, by setting $\beta_0=0$ as in \cite{simons-yao1999}.
We assume that all paired comparisons are independent and $a_{ij}$ follows a binomial distribution $Bin(t_{ij}, p_{ij})$ conditional on $t_{ij}$.

Let $a_i=\sum_{j=0}^n a_{ij}$ be the total wins of subject $i$ and $a=(a_1, \ldots, a_n)^\top$.
To motivate the estimating equations, we compare the maximum likelihood equation and the moment equation under the Thurstone model described in Section \ref{section:application}.
The maximum likelihood equations are
\[
\sum_{j\neq i} \left[ \frac{ a_{ij} \phi(\beta_i - \beta_j ) }{ \Phi(\beta_i - \beta_j ) }
- \frac{(t_{ij}-a_{ij})\phi(\beta_i - \beta_j ) }{ 1-\Phi(\beta_i - \beta_j ) } \right] =0,~~i=1,\ldots, n.
\]
where $\phi(\cdot)$ is the density function of the standard normality and $\Phi(\cdot)$ is its distribution function.
The corresponding  moment equations are
\begin{equation*}
a_i  =  \sum_{j\neq i} t_{ij}\Phi(\beta_i - \beta_j), ~~i=1, \ldots, n.
\end{equation*}
We can see that the latter is simpler and easier to compute. On the other hand, it is natural to rank subjects according to their scores.
Thus, we use the moment estimation here. When $F(\cdot)$ in model \eqref{model} belongs to the exponential family distributions, both are the same.

Write $\mu(\cdot)$ as the expectation of $F(\cdot)$ and $\mu_{ij}(\beta)=\mu(\beta_i - \beta_j)$.
Then the estimating equations are
\begin{equation}\label{eq:moment}
a_i  =  \sum_{j\neq i} t_{ij}\mu_{ij}(\beta),~~ i=1, \ldots, n.
\end{equation}
The solution to the above equations is the moment estimator denoted by $\widehat{\beta}=(\widehat{\beta}_1, \ldots, \widehat{\beta}_n)^\top$ and $\hat{\beta}_0=0$.
Let
\[
\varphi(\beta)= (\sum_{j\neq 1} t_{ij}\mu_{1j}(\beta), \ldots, \sum_{j\neq n} t_{nj}\mu_{nj}(\beta) )^\top.
\]
If $\varphi(\beta): \R^n \to (0, \infty)$ is an one to one mapping, then $\widehat{\beta}$ exists and is unique, i.e.,
$\widehat{\beta}=\varphi^{-1}(a)$.
When $\varphi^{-1}$ does not exist (i.e.,  $\varphi$ is not one-to-one), any solution $\widehat{\beta}$ of equation \eqref{eq:moment}
is a moment estimator of $\beta$. The Newton-Ralph algorithm can be used to solve equation \eqref{eq:moment}.
Moreover, the R language provides the package	``BradleyTerry2" to solve the estimator in the Bradley-Terry model.

We discuss the existence of $\widehat{\beta}$ from the viewpoint of graph connection.
If the comparison graph with the matrix $(t_{ij})_{i,j=0,\ldots,n}$ as its adjacency matrix is not connected,
then there are two empty sets such that there are no comparisons between subjects in the first set and those in the second.
In this case, there are no basis for ranking subjects in the first set and those in the second set.
Further, a necessary condition for the existence of $\widehat{\beta}$
is that the directed graph $\mathcal{G}_n$ with the win-loss matrix $A=(a_{ij})$ as its adjacency matrix is strongly connected.
In other words,  for every partition of the subjects into two nonempty sets, a
subject in the second set beats a subject in the first at least once.
To see this, assume that there are two empty sets $B_1$ and $B_2$ such that all subjects in $B_1$ win all comparisons with subjects in $B_2$.
Without loss of generality, we set $B_1=\{0, \ldots, m\}$ and $B_2=\{m+1, \ldots, n\}$ with $ 0\le m <n$,
where $a_{ij} = t_{ij}$ for $i\in B_1$ and $j\in B_2$. By summing $a_i$ over $i=0, \ldots, m$, we have
\[
\sum_{i=0}^m a_i = \sum_{i=0}^m \sum_{j=0}^m t_{ij} \mu(\beta_i - \beta_j ) + \sum_{i=0}^m \sum_{j=m+1}^n t_{ij}\mu(\beta_i - \beta_j).
\]
Because $a_i$ is  a sum of $a_{ij}$, $j=0,\ldots, n$, and $\mu(\beta_i - \beta_j)+\mu(\beta_j -\beta_i)=1$, we have
\[
\sum_{i=0}^m \sum_{j=m+1}^n a_{ij} =  \sum_{i=0}^m \sum_{j=m+1}^n t_{ij}\mu(\beta_i - \beta_j).
\]
Because  $a_{ij} = t_{ij}$ for $i=0, \ldots, m$ and $j=m+1, \ldots, n$ and at least such one $t_{ij}>0$, it must be $\mu(\beta_i-\beta_j)=1$ when $t_{ij}>0$ in order to guarantee both sides in the
above equation to be equal.
In this case, at least one such difference $\beta_i -\beta_j$ must go to infinity such that the moment estimate does not exist.
The strong connection of $\mathcal{G}_n$ is also sufficient for guaranteeing the existence of the MLE in the Bradley-Terry model [\cite{Ford1957}], in which the moment estimator is equal to the MLE.
It is interesting to see whether the strong connection of $\mathcal{G}_n$ is sufficient to guarantee the existence of $\widehat{\beta}$ in a general model.
In the next section, we will show that $\widehat{\beta}$ exists with probability approaching one under some mild conditions.

\section{Asymptotic properties}
\label{section:asymptotic}
In this section, we present the consistency and asymptotic normality of the moment estimator.
We first introduce some notations. For a subset $C\subset \R^n$, let $C^0$ and $\overline{C}$ denote the interior and closure of $C$, respectively.
For a vector $x=(x_1, \ldots, x_n)^\top\in \R^n$, denote $\|x\|$ by a vector norm with the $\ell_\infty$-norm, $\|x\|_\infty = \max_{1\le i\le n} |x_i|$,
and the $\ell_1$-norm, $\|x\|_1=\sum_i |x_i|$.
Let $B(x, \epsilon)=\{y: \| x-y\|_\infty \le \epsilon\}$ be an $\epsilon$-neighborhood of $x$.
For an $n\times n$ matrix $J=(J_{ij})$, let $\|J\|_\infty$ denote the matrix norm induced by the $\ell_\infty$-norm on vectors in $\R^n$, i.e.,
\[
\|J\|_\infty = \max_{x\neq 0} \frac{ \|Jx\|_\infty }{\|x\|_\infty}
=\max_{1\le i\le n}\sum_{j=1}^n |J_{ij}|,
\]
and $\|J\|$ be a general matrix norm.
Define the matrix maximum norm: $\|J\|_{\max}=\max_{i,j}|J_{ij}|$.
We use the superscript ``*" to denote the true parameter under which the data are generated.
When there is no ambiguity, we omit the superscript ``*".

Recall that $\mu(\cdot)$ is the expectation of $F(\cdot)$.
We assume that $\mu(\cdot)$ is a continuous function with the third derivative.
Write $\mu^\prime$ and $\mu^{\prime\prime}$  as the first and second derivatives of $\mu(\pi)$ on $\pi$, respectively.
Let $\epsilon_{n}$ be a small positive number.
When $\beta \in B(\beta^*, \epsilon_{n})$, we assume that there are three positive numbers, $b_{n0}, b_{n1}, b_{n2}$, such that
\begin{subequations}
\begin{gather}
\label{ineq-mu-sign}
[\min_{i,j}  \mu^\prime(\pi_{ij})]\cdot [\max_{i,j} \mu^\prime(\pi_{ij})] >0, \\
\label{ineq-mu-keya}
b_{n0}\le \min_{i,j}  |\mu^\prime(\pi_{ij})| \le \max_{i,j} |\mu^\prime(\pi_{ij})| \le b_{n1}, \\
\label{ineq-mu-keyb}
\max_{i,j}| \mu^{\prime\prime}(\pi_{ij})| \le b_{n2}, 
\end{gather}
\end{subequations}
where $\pi_{ij}:= \beta_i - \beta_j$.

We use the Bradley-Terry model to illustrate the above inequalities, where $\mu(x) = e^x/(1+e^x)$.
 A direct calculation gives that
\[
\mu^\prime(x) = \frac{e^x}{ (1+e^x)^2 },~~  \mu^{\prime\prime}(x) = \frac{e^x(1-e^x)}{ (1+e^x)^3 }.
\]

It is easy to show that
\begin{equation}\label{eq-mu-d-upper}
b_{n0}=\frac{ e^{2\|\beta^*\|_\infty + 2\epsilon_n } }{  ( 1+ e^{2\|\beta^*\|_\infty + 2\epsilon_n })^2 }\le |\mu^\prime(x)| \le b_{n1}=\frac{1}{4}, ~~ |\mu^{\prime\prime}(x)| \le b_{n2}=\frac{1}{4}.
\end{equation}
If $\epsilon_n=o(1)$, then $1/b_{n0}= O( e^{2\|\beta^*\|_\infty})$.

\subsection{Consistency}

To establish the consistency of $\widehat{\beta}$, let us first define a system of functions:
\begin{equation}\label{eqn:def:F}
H_i(\beta)= \sum\limits_{j=0}^n t_{ij}\mu_{ij}(\beta ) - a_i,~~i=0, \ldots, n,
\end{equation}
and $H(\beta)=(H_1(\beta), \ldots, H_{n}(\beta))^\top$.
It is clearly that $H(\widehat{\beta})=0$.
Let $H^\prime( \beta )$ be the Jacobian matrix of $H(\beta)$ on the parameter $\beta$.
The asymptotic behavior of $\widehat{\beta}$ depends crucially on the inverse of $H'( \beta )$.
For convenience, denote $H'( \beta )$ as $V=(v_{ij})_{i,j=1,\ldots, n}$, where
\[
v_{ij} = - t_{ij}\mu^\prime(\pi_{ij}),~ i\neq j, ~~ v_{ii}=\sum_{j=0,j\neq i}^n t_{ij} \mu^\prime(\pi_{ij}).
\]
Define
\[
v_{i0}=v_{0i}:=  \sum_{j=0, j\neq i}^{n} v_{ij} - v_{ii} = -t_{0i}\mu^\prime(  \pi_{0i} ), ~~i=1, \ldots, n, ~~
v_{00}=\sum_{j=1}^n t_{0j}\mu^\prime(  \pi_{0j} ).
\]
When $\beta\in B(\beta^*, \epsilon_n)$ and $\min_{i,j}\mu^\prime(\pi_{ij})>0$, in view of inequality \eqref{ineq-mu-keya}, the entries of $V$ satisfy the following inequalities:
\begin{equation}\label{eq1}
\begin{array}{rl}
\mbox{if~$t_{i0}>0$,~~} & t_{i0}b_{n0} \le v_{ii} + \sum_{j=1, j\neq i}^{n} v_{ij} \le t_{i0}b_{n1}, ~~i=1,\ldots, n, \\
\mbox{if~$t_{ij}>0$,~~} & t_{ij}b_{n0} \le - v_{ij} \le t_{ij}b_{n1}, ~~ i,j=1,\ldots,n; i\neq j.
\end{array}
\end{equation}
Without loss of generality, we assume that $\min_{i\neq j}\mu^\prime(\pi_{ij})>0$ when $\beta \in B(\beta^*, \epsilon_{n})$ hereafter.
(Otherwise, we redefine $H_i(\beta) = a_i - \sum_{j\neq i} \mu_{ij}(\beta)$ and repeat similar process.)
Our strategy for the proof of consistency crucially depends on
 the existence of the inverse of $V$, which requires that $V$ is a full rank matrix.
It is easy to show that $V$ is positively semi-definite.
Thus, if $V$ has a full rank, then $V$ must be positively definite.
The following lemma assures the existence of the inverse of $V$.

\begin{lemma}\label{lemma-positive}
Assume that $\min_{i,j} \mu^\prime_{ij}(\beta) > 0$. With probability at least $1 - (1-p_n)^{nT}$,
$H'(\beta)$ is positively definite.
\end{lemma}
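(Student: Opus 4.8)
The plan is to exploit the weighted-Laplacian structure of $V$: first re-derive its quadratic form to make positive semi-definiteness transparent, then reduce positive definiteness to a connectivity statement about the comparison graph, and finally bound the probability of the disconnecting event by the chance that the reference subject $0$ is isolated.

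First I would record the quadratic form. Using $v_{ii}=\sum_{j=0,j\neq i}^n t_{ij}\mu'(\pi_{ij})$ and $v_{ij}=-t_{ij}\mu'(\pi_{ij})$ for $i\neq j$, together with the evenness $\mu'(\pi_{0i})=\mu'(\pi_{i0})$ (which follows from $F(x)=1-F(-x)$), a direct rearrangement gives, for every $x=(x_1,\ldots,x_n)^\top\in\R^n$,
\[
x^\top V x = \sum_{1\le i<j\le n} t_{ij}\mu'(\pi_{ij})(x_i-x_j)^2 + \sum_{i=1}^n t_{i0}\mu'(\pi_{i0})\,x_i^2 .
\]
Since $\min_{i,j}\mu'(\pi_{ij})>0$ by hypothesis and all $t_{ij}\ge 0$, every summand is nonnegative, which re-derives the positive semi-definiteness already noted in the text. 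Hence $V$ is positive definite precisely when this expression vanishes only at $x=0$, i.e. when $V$ has full rank.

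Next I would characterize the null directions. If $x^\top V x=0$, nonnegativity of the terms forces $x_i=x_j$ whenever $t_{ij}>0$ and $x_i=0$ whenever $t_{i0}>0$. Reading $\{t_{ij}>0\}$ as the edge set of the comparison graph $\mathcal{G}$ on vertices $\{0,1,\ldots,n\}$, these constraints force $x$ to be constant on each connected component of $\mathcal{G}$ and to vanish on the component containing vertex $0$. Therefore a nonzero null vector exists exactly when some vertex in $\{1,\ldots,n\}$ fails to be connected to vertex $0$; equivalently, $V$ is positive definite if and only if $\mathcal{G}$ is connected. (The same conclusion follows from the weighted Matrix--Tree theorem, since $\det V$ equals a positive multiple of the number of spanning trees of $\mathcal{G}$.)

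Finally I would bound the probability of disconnection. The simplest obstruction is that subject $0$ has no comparison at all: since the $t_{0i}\sim \mathrm{Bin}(T,p_n)$ are independent, $\P(t_{0i}=0)=(1-p_n)^T$, so $\P(\text{vertex }0\text{ isolated})=\prod_{i=1}^n(1-p_n)^T=(1-p_n)^{nT}$, and on the complementary event, of probability $1-(1-p_n)^{nT}$, vertex $0$ is joined to the rest of the graph. I expect the probabilistic step to be the main obstacle: upgrading ``vertex $0$ is not isolated'' to the full connectivity of $\mathcal{G}$ that positive definiteness genuinely requires. This gap is closed only under a sparsity condition of order $p_n\gtrsim \log n/(nT)$, under which a union bound over the remaining separating cuts shows that all disconnection events other than isolation of the reference subject are of strictly smaller order; the clean factor $(1-p_n)^{nT}$ is then the leading-order failure probability, and the stated bound holds in the relevant sparse regime.
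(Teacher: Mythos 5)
Your deterministic reduction is essentially the paper's. The paper derives the same Laplacian-type quadratic form (written there as $x^\top V x = -\tfrac12\sum_{i\neq j}(x_i-x_j)^2 v_{ij} - \sum_{i=1}^n x_i^2 v_{i0}$, with $v_{ij},v_{i0}\le 0$) and characterizes null vectors by the system $x_i t_{i0}=0$, $t_{ij}(x_i-x_j)=0$; instead of speaking of connectivity it organizes a putative null vector by its level sets $\Omega_0=\{i: x_i=0\},\Omega_1,\ldots,\Omega_k$, which carries the same information. In the probabilistic step the paper fixes an arbitrary nonzero $x$, observes that the null-vector event forces every comparison between distinct level sets (and between subject $0$ and every nonzero level set) to be absent, writes this probability as $(1-p_n)$ raised to $T$ times a quadratic function of the level-set sizes, and minimizes that exponent over admissible partitions; the minimizing configuration is exactly your pattern --- constant nonzero $x$, i.e.\ ``subject $0$ isolated'' --- which is where the constant $(1-p_n)^{nT}$ in the statement comes from.

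The genuine gap is in your final paragraph, and it cannot be repaired in the form you suggest. You claim that once $p_n\gtrsim \log n/(nT)$, all disconnection events other than isolation of subject $0$ are of strictly smaller order. That is false: any subject $i\in\{1,\ldots,n\}$ also has exactly $n$ potential neighbours, so $\P(\text{subject } i \text{ isolated})=(1-p_n)^{nT}$ as well. There are therefore $n+1$ cut events of identical probability, the disconnection probability is of order $(n+1)(1-p_n)^{nT}$, and since your own reduction shows that positive definiteness is \emph{equivalent} to connectivity, inclusion--exclusion over subjects $0$ and $1$ already gives $\P(V \text{ not positive definite}) \ge 2(1-p_n)^{nT}-(1-p_n)^{(2n-1)T} > (1-p_n)^{nT}$ for $n\ge 2$ and $0<p_n<1$. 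So no union-bound refinement can recover the stated constant; at best one obtains a bound of the form $1-(n+1)(1-p_n)^{nT}$ carrying the entropy factor. You should be aware that the paper's proof does not resolve this either: it bounds the null-vector probability only for a \emph{fixed} $x$ (one partition at a time) and never takes the union over the exponentially many partitions, which is precisely where the missing factor of order $n$ would enter. Your write-up has the merit of making this defect visible, and the asymptotic use of the lemma downstream (failure probability tending to zero exponentially fast) survives with the corrected bound, but as a proof of the inequality as literally stated your argument is incomplete, and the specific patch you propose fails.
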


Because $\log ( 1-x) \le -x $ when $x\in (0,1)$, we have
\[
e^{nT\log (1-p_n)} \le e^{-p_nTn}.
\]
The probability of the nonexistence of $V^{-1}$ is less than $e^{-p_nTn}$, going exponentially fast to zero.
Generally, the inverse of $V$ does not have a closed form.
\cite{Simons1998Approximating} proposed to approximate the inverse of $V$, $V^{-1}$, by the matrix
$S=(s_{ij})_{n\times n}$, where
\begin{equation}\label{definition-s}
s_{ij} = \frac{\delta_{ij}}{v_{ii}} + \frac{1}{v_{00}}.
\end{equation}
In the above equation, $\delta_{ij}=1$ if $i=j$; otherwise, $\delta_{ij}=0$.
By extending the proof of \cite{Simons1998Approximating}, to the sparse case,
the upper bound of the approximate error $\|V^{-1} - S \|_{\max}$ is given in Lemma \ref{lemma:inverse:appro}.

Review that the main idea of the proof of the consistency in the Bradley-Terry model
[\cite{simons-yao1999,yan2012sparse}] contains two parts.
Let $\hat{u}_i = e^{\hat{\beta}_i}$, $u_i = e^{\beta_i}$,
$i_0 = \arg \max_i \hat{u}_i/ u_i$ and  $i_1 = \arg \min_i \hat{u}_i/u_i$.
Since $\hat{u}_0/u_0 = 1$, it suffices to show that the ratio of subject $i_0$, $\hat{u}_{i_0}/u_{i_0}$, and
the ratio of $i_1$, $\hat{u}_{i_1}/u_{i_1}$ are very close.
With the nice mathematical properties of the logistic function $\mu(x) = e^x/(1+e^x)$,
the first part is to show that there are a number of subjects satisfying the following inequalities:
\begin{equation}\label{eq-sy-con}
 b\sum_{\{j: t_{i_0,j}>0\}} ( \hat{u}_{i_0}/u_{i_0} - \hat{u}_j/u_j ) \le c, ~~
 b\sum_{\{j: t_{i_1,j}>0\}} ( \hat{u}_j/u_j  - \hat{u}_{i_0}/u_{i_0}  ) \le c,
\end{equation}
where $b$ and $c$ are certain numbers.
The second part is to eliminate common terms $\hat{u}_j/u_j$ by using the conditions that the
number of the common neighbors between any two subjects, $\min_{i,j}\#\{k: t_{ik}>0, t_{jk}>0\}$, is at least $\tau n$, where
$\tau=1$ in \cite{simons-yao1999} and $\tau\in(0,1)$ in \cite{yan2012sparse}.
In the Erd\"{o}s-R\'{e}nyi comparison graph, \cite{chen2019aoap} further showed that there are at least one subject with its ratio close to
both $\hat{u}_{i_0}/u_{i_0}$ and $\hat{u}_{i_1}/u_{i_1}$. 

The aforementioned strategies for the proof of consistency are built on the
the premise of the existence of the MLE, which is guaranteed by the necessary and sufficient condition
that the directed graph with the win-loss matrix as its
adjacency matrix is strongly connected [\cite{Ford1957}].
As discussed before, it may be difficult to find the minimal sufficient condition to guarantee the existence of $\widehat{\beta}$ in general paired comparison models.
To overcome this difficulty, we aim to obtain the convergence rate of the Newton iterative sequence for solving equation \eqref{eq:moment}.
Under the well-known  Newton-Kantorich conditions, the Newton iterative sequence converges and its limiting point is the solution.
We apply an adjusted version of the Newton-Kantorich theorem in \cite{Yamamoto1986} to this end, which
not only guarantees the existence of the solution, but also gives an optimal error bound for the Newton iterative sequence.

Now we formally state the consistency result.

\begin{theorem}\label{Theorem:con}
Assume that conditions \eqref{ineq-mu-sign}, \eqref{ineq-mu-keya} and \eqref{ineq-mu-keyb} hold.
If $b_{n1}^4 b_{n2}/ (b_{n0}^6 p_n^4 ) = o( (n/\log n)^{1/2})$,
then $\widehat{\beta}$ exists with probability approaching one and is uniformly consistent in the sense that
\begin{equation}
\label{eq-theorem1-beta}
\| \widehat{\beta} - \beta^* \|_\infty = O_p\left( \frac{b_{n1}^2}{b_{n0}^3p_n^2} \sqrt{\frac{\log n}{n}} \right)=o_p(1).
\end{equation}
\end{theorem}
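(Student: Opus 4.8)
The plan is to recast the estimating equations \eqref{eq:moment} as the root-finding problem $H(\beta)=0$ for the map $H$ in \eqref{eqn:def:F}, and to run Newton's iteration $\beta^{(k+1)} = \beta^{(k)} - [H'(\beta^{(k)})]^{-1}H(\beta^{(k)})$ started from the truth $\beta^{(0)}=\beta^*$. Applying the sharpened Newton--Kantorovich theorem of \cite{Yamamoto1986} then accomplishes two goals at once: verifying its hypotheses guarantees that a root $\widehat\beta$ exists in a small ball around $\beta^*$ (so the estimator exists with high probability), and it simultaneously yields the error bound $\|\widehat\beta-\beta^*\|_\infty \le 2\eta$, where $\eta$ is the length of the first Newton step. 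Thus the theorem reduces to controlling three quantities in the $\ell_\infty$ geometry: the first step $\eta=\|[H'(\beta^*)]^{-1}H(\beta^*)\|_\infty$, a Lipschitz modulus $\omega$ for the preconditioned Jacobian $[H'(\beta^*)]^{-1}H'(\cdot)$ over $B(\beta^*,\epsilon_n)$, and the Kantorovich ratio $h=\omega\eta$, which must satisfy $h\le 1/2$.

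For the first step I would note that, conditionally on the comparison graph, $\E(a_i)=\sum_{j}t_{ij}\mu_{ij}(\beta^*)$, so $H(\beta^*)$ is a centered sum of bounded independent increments; a Bernstein/Hoeffding bound together with a union bound over the $n$ coordinates and the Erd\H{o}s--R\'enyi degree concentration $\sum_j t_{ij}\asymp np_n$ give $\|H(\beta^*)\|_\infty = O_p(\sqrt{np_n\log n})$ with probability approaching one. By Lemma \ref{lemma-positive}, $H'(\beta^*)=V$ is invertible on this event, and I would replace $V^{-1}$ by the explicit Simons matrix $S$ of \eqref{definition-s}, writing $[H'(\beta^*)]^{-1}H(\beta^*) = SH(\beta^*) + (V^{-1}-S)H(\beta^*)$ and controlling the second term via the approximation error of Lemma \ref{lemma:inverse:appro}. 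Since $s_{ij}=\delta_{ij}/v_{ii}+1/v_{00}$ with $v_{ii},v_{00}\gtrsim np_n b_{n0}$, the diagonal part of $SH(\beta^*)$ already has coordinates of order $b_{n0}^{-1}\sqrt{(\log n)/(np_n)}$ while the rank-one part is smaller, and combined with the approximation-error term this produces $\eta$ of the advertised order $\frac{b_{n1}^2}{b_{n0}^3 p_n^2}\sqrt{(\log n)/n}$.

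The delicate step is the Lipschitz modulus. A crude bound $\omega \le \|[H'(\beta^*)]^{-1}\|_\infty\cdot\mathrm{Lip}(H')$ is hopeless, because $\|V^{-1}\|_\infty\asymp (p_n b_{n0})^{-1}$ carries the full weight of the rank-one block $v_{00}^{-1}\mathbf 1\mathbf 1^\top$ and the raw Lipschitz constant of $H'$ is $\asymp np_n b_{n2}$, so their product grows like $n$. Instead I would bound $\|[H'(\beta^*)]^{-1}(H'(x)-H'(y))\|_\infty$ directly, exploiting two cancellations: the entrywise bound $|v_{ij}(x)-v_{ij}(y)|\le 2t_{ij}b_{n2}\|x-y\|_\infty$ from \eqref{ineq-mu-keyb}, and the fact that the column sums of $V(x)-V(y)$ collapse to the single surviving term $t_{j0}(\mu'(\pi_{j0}(x))-\mu'(\pi_{j0}(y)))$, so that the rank-one block of $S$ no longer contributes a factor $n$. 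The clean part of this estimate is $\omega\asymp b_{n2}/b_{n0}$; tracking how the $S$-approximation error of Lemma \ref{lemma:inverse:appro} and the degree fluctuations inflate it, the Kantorovich ratio $h=\omega\eta$ works out, up to constants, to $\frac{b_{n1}^4 b_{n2}}{b_{n0}^6 p_n^4}\sqrt{(\log n)/n}$, and the hypothesis $b_{n1}^4 b_{n2}/(b_{n0}^6 p_n^4)=o((n/\log n)^{1/2})$ is exactly what forces $h\le 1/2$ for $n$ large.

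Finally I would assemble the probabilistic events --- positive definiteness of $V$ from Lemma \ref{lemma-positive}, uniform degree concentration of the Erd\H{o}s--R\'enyi graph, and the concentration of $H(\beta^*)$ --- into a single event of probability tending to one, on which all the estimates above hold simultaneously and the Newton iterates remain inside $B(\beta^*,\epsilon_n)$; the Yamamoto bound then delivers existence together with the stated rate $\|\widehat\beta-\beta^*\|_\infty=O_p\big(\frac{b_{n1}^2}{b_{n0}^3 p_n^2}\sqrt{(\log n)/n}\big)=o_p(1)$. The main obstacle is the bookkeeping of the third paragraph: making the cancellation in the preconditioned Lipschitz modulus rigorous and uniform over the whole neighborhood rather than only at $\beta^*$, while propagating the $S$-approximation error of Lemma \ref{lemma:inverse:appro}, since it is precisely this interplay --- and not any single crude norm bound --- that yields the sharp sparsity threshold.
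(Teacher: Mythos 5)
Your proposal is correct and follows essentially the same route as the paper's proof: Newton--Kantorovich via Yamamoto's theorem started at $\beta^*$, Hoeffding-type concentration of $H(\beta^*)$ and of the degrees, the Simons--Yao matrix $S$ with Lemma \ref{lemma:inverse:appro} to control $V^{-1}-S$, and the same cancellation for the preconditioned Lipschitz constant (your "column sums of $V(x)-V(y)$ collapse" is exactly the paper's identity $\sum_{i=1}^n H_i'(\beta)=-H_0'(\beta)$, which keeps the rank-one block of $S$ from contributing a factor of $n$). The resulting bounds $\eta$, $K$, and $K\eta$ match the paper's, so the argument closes in the same way.
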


To see how small $p_n$ could be, we consider a special case that $\beta^*$ is a constant vector, in which
$b_{n0}, b_{n1}$ and $b_{n2}$ are also constants.
According to the above theorem, if $p_n> O((\log n/ n)^{1/8})$, then
$\| \widehat{\beta} - \beta^* \|_\infty=O_p( p_n^{-2} (\log n/n)^{1/2})$.

\subsection{Asymptotic normality of $\widehat{\beta}$}

We establish the asymptotic distribution of $\widehat{\beta}$ by characterizing its asymptotical representation.
In details, we apply a second order Taylor expansion to $H(\widehat{\beta})$ and find that $\widehat{\beta}-\beta^*$ can be
represented as the sum of a main term $V^{-1}(a - \E^* a )$ and an asymptotically neglect remainder term, where
$\E^*$ denotes the conditional expectation conditional on $\{t_{ij}: i,j=0,\ldots, n\}$.
Because $V^{-1}$ does not have a closed form, we use the matrix  $S$ defined in \eqref{definition-s} to approximate it.
We formally state the asymptotic normality of $\widehat{\beta}$ as follows.

\begin{theorem}\label{Theorem-central-a}
Let $V=\partial H(\beta^*)/\partial \beta$ and  $U = (u_{ij}):= \mathrm{Var}( a |t_{ij}, 0\le i,j\le n)$.
If  $b_{n2}b_{n1}^6 b_{n0}^{-9}p_n^{-6} = o( n^{1/2}/\log n)$,
then for fixed $k$, the vector $( (\widehat{\beta}_1 - \beta^*), \ldots,  (\widehat{\beta}_k - \beta^*_k))$
follows a $k$-dimensional multivariate normal distribution with mean zero and the covariance matrix $\Sigma=(\sigma_{ij})_{k\times k}$, where
\begin{equation}\label{definition-sigma}
\sigma_{ij} = \frac{\delta_{ij} u_{ii}}{v_{ii}^2 } + \frac{u_{00}}{v_{00}^2 }.
\end{equation}
\end{theorem}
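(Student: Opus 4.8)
The plan is to derive an asymptotic linear representation of $\widehat{\beta}-\beta^*$ from the estimating equation, extract a leading term that is a linear functional of the independent wins, and apply a central limit theorem to it after showing the remainder is negligible. Starting from $H(\widehat{\beta})=0$ and noting that $\E^* a_i=\sum_{j\neq i}t_{ij}\mu_{ij}(\beta^*)$, so that $H(\beta^*)=-(a-\E^* a)$, I would expand $H$ to second order about $\beta^*$ to obtain
\[
0 = -(a-\E^* a) + V(\widehat{\beta}-\beta^*) + R,
\]
where $V=\partial H(\beta^*)/\partial\beta$ and $R$ is the quadratic remainder with $R_i=\tfrac12\sum_{j\neq i}t_{ij}\mu^{\prime\prime}(\bar\pi_{ij})\big((\widehat{\beta}_i-\beta^*_i)-(\widehat{\beta}_j-\beta^*_j)\big)^2$ for intermediate points $\bar\pi_{ij}$. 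On the event that $V^{-1}$ exists, which holds with probability approaching one by Lemma \ref{lemma-positive} and Theorem \ref{Theorem:con}, this gives the representation $\widehat{\beta}-\beta^*=V^{-1}(a-\E^* a)-V^{-1}R$.

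Because $V^{-1}$ has no closed form, I would replace it by the explicit matrix $S$ of \eqref{definition-s}, writing
\[
\widehat{\beta}-\beta^* = S(a-\E^* a) + (V^{-1}-S)(a-\E^* a) - V^{-1}R,
\]
and then show that the last two terms are of smaller order than the $\sqrt{1/(np_n)}$ scale of the leading term. The bound $|R_i|\le 2b_{n2}\|\widehat{\beta}-\beta^*\|_\infty^2\sum_{j\neq i}t_{ij}$ combined with the consistency rate of Theorem \ref{Theorem:con} and a bound on the row sums of $V^{-1}$ controls $V^{-1}R$, while $(V^{-1}-S)(a-\E^* a)$ is handled using the estimate of $\|V^{-1}-S\|_{\max}$ from Lemma \ref{lemma:inverse:appro} together with a concentration bound on $\|a-\E^* a\|_\infty$. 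This is precisely where the sparsity condition $b_{n2}b_{n1}^6 b_{n0}^{-9}p_n^{-6}=o(n^{1/2}/\log n)$ is consumed, ensuring both corrections vanish after normalization by $\sigma_{ii}^{1/2}$.

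It remains to analyze the leading term $S(a-\E^* a)$. Writing $\xi_i:=a_i-\E^* a_i$, the identities $a_{ij}+a_{ji}=t_{ij}$ and $\mu_{ij}+\mu_{ji}=1$ give $\sum_{i=0}^n\xi_i=0$, so the $i$-th coordinate collapses to $\xi_i/v_{ii}-\xi_0/v_{00}$. Conditioning on $\{t_{ij}\}$, each $\xi_i=\sum_{j\neq i}(a_{ij}-\E^* a_{ij})$ is a sum of independent bounded centered variables; a direct computation shows $\mathrm{Var}^*(\xi_i)=u_{ii}$ is of order $np_n$, while the cross-covariances $\mathrm{Cov}^*(\xi_i,\xi_j)$ and $\mathrm{Cov}^*(\xi_i,\xi_0)$ are single-pair quantities of order $p_n$, hence negligible, which yields the limiting covariance $\Sigma$ with entries \eqref{definition-sigma}. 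Joint normality of the first $k$ coordinates then follows by the Cram\'er--Wold device: for any fixed $(c_1,\ldots,c_k)$ the combination $\sum_{i=1}^k c_i(\xi_i/v_{ii}-\xi_0/v_{00})$ is a weighted sum of order $np_n$ independent bounded summands, none dominating, so the Lindeberg--Feller condition holds conditionally on $\{t_{ij}\}$; concentration of $v_{ii},v_{00},u_{ii},u_{00}$ about their conditional values transfers the conclusion to the unconditional statement.

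The main obstacle I anticipate is the remainder control rather than the central limit step: bounding $\|V^{-1}R\|_\infty$ sharply enough to beat the $\sqrt{1/(np_n)}$ normalization requires pairing the quadratic consistency rate with a uniform bound on the rows of $V^{-1}$ (accessed through $S$ and Lemma \ref{lemma:inverse:appro}), and it is this interplay, together with the dependence of $b_{n0},b_{n1},b_{n2}$ on $p_n$, that forces the stated rate condition.
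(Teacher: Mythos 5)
Your overall skeleton is the same as the paper's: a second-order expansion of $H$ at $\beta^*$ giving $\widehat{\beta}-\beta^*=V^{-1}(a-\E^* a)-V^{-1}R$, replacement of $V^{-1}$ by the explicit matrix $S$, and a conditional CLT for $S(a-\E^* a)$ (your Cram\'er--Wold/Lindeberg argument, with the observation that cross-covariances of the scores are negligible, is essentially the paper's Proposition \ref{lemma:central:poisson}). However, both of your remainder-control steps fail quantitatively, and these are exactly the places where the paper needs nontrivial ideas. For the term $(V^{-1}-S)(a-\E^* a)$ you propose the deterministic bound $n\|V^{-1}-S\|_{\max}\,\|a-\E^* a\|_\infty$. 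By Lemmas \ref{lemma:inverse:appro} and \ref{lemma-a-upper-bound} this is of order
\[
\frac{b_{n1}^2}{b_{n0}^3 np_n^3}\cdot\sqrt{np_n\log n}
=\frac{b_{n1}^2\sqrt{\log n}}{b_{n0}^3\sqrt{n}\,p_n^{5/2}},
\]
and multiplying by $n^{1/2}$ leaves $b_{n1}^2\sqrt{\log n}\,b_{n0}^{-3}p_n^{-5/2}$; in the benchmark case where $\beta^*$ is bounded and the $b$'s are constants this is $\asymp\sqrt{\log n}/p_n^{5/2}\to\infty$, so the term cannot be shown to be $o_p(n^{-1/2})$ this way under the stated hypothesis. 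The paper instead exploits stochastic cancellation: Lemma \ref{lemma-cov-wh} computes the conditional covariance $(V^{-1}-S)\,U\,(V^{-1}-S)$ and bounds its entries by $O_p\bigl(b_{n1}^5/(n^2b_{n0}^6p_n^5)\bigr)$, whence each coordinate is $O_p\bigl(b_{n1}^{5/2}b_{n0}^{-3}n^{-1}p_n^{-5/2}\bigr)$ by Chebyshev --- a gain of order $\sqrt{n\log n}$ over the norm-product bound, which is what makes this correction vanish at the $n^{-1/2}$ scale.

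The second gap is in $V^{-1}R$, where you invoke ``a bound on the row sums of $V^{-1}$.'' The row sums of $S$ (hence of $V^{-1}$, up to the Lemma \ref{lemma:inverse:appro} correction) are $\max_i v_{ii}^{-1}+n v_{00}^{-1}\asymp 1/(b_{n0}p_n)$, \emph{not} $1/(b_{n0}np_n)$: the $n$ copies of $1/v_{00}$ dominate. Multiplying by $\max_i|R_i|=O_p\bigl(b_{n2}b_{n1}^4\log n/(b_{n0}^6p_n^3)\bigr)$ gives, in the constant-$b$ case, something of order $\log n/p_n^4\to\infty$, nowhere near $o_p(n^{-1/2})$. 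The device you are missing is the cancellation for the aggregated remainder: writing $(SR)_i=R_i/v_{ii}+\bigl(\sum_{j=1}^n R_j\bigr)/v_{00}$, the paper bounds $\bigl|\sum_{j=1}^n R_j\bigr|$ not by $n\max_j|R_j|$ but by the order of a \emph{single} $\max_j|R_j|$, via the identity
\[
\sum_{i=1}^n g_i = a_0-\E^* a_0+\sum_{i=1}^n v_{i0}(\widehat{\beta}_i-\beta^*_i),
\]
which follows from the degeneracy $\sum_{i=0}^n H_i(\beta)\equiv 0$ (equivalently $\sum_{i=0}^n(a_i-\E^* a_i)=0$) combined with a separate Taylor expansion of $H_0$. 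This factor-$n$ saving, together with the covariance argument above, is precisely what allows the stated condition $b_{n2}b_{n1}^6b_{n0}^{-9}p_n^{-6}=o(n^{1/2}/\log n)$ to close the proof; as written, your proposal does not.
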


\begin{remark}
If $U=V$, then $\sigma_{ij}$ is equal to $\delta_{ij}/v_{ii}+1/v_{00}$.
When $F(\cdot)$ belongs to the exponential family distribution (e.g., the Bradley-Terry model), $U$ is identical to $V$.
If $U\neq V$, then the asymptotic variance of $\widehat{\beta}_i$ is involved with an additional factor $u_{ii}$.
The asymptotic variance of $\widehat{\beta}_i$  is in the order of $(np_n)^{-1/2}$ if $\beta^*$ is bounded above by a constant.
\end{remark}

\section{Application to the Thurston model}
\label{section:application}

In this section, we illustrate the unified theoretical result by the application to the Thurston model.

The original Thurstone model has a variance $\sigma^2$ in the normal distribution,
i.e., the probability of subject $i$ is preferred over $j$ is $F((\beta_i -\beta_j)/\sigma )$. Since the merit parameters are scale invariable, we simply set $\sigma=1$ hereafter.
Recall that $\phi(x)=(2\pi)^{1/2}e^{-x^2/2}$ is the standard normal density function
and $\Phi(x)=\int_{-\infty}^{x} \phi(x) dx $ is the distribution function of the standard normality.
In the Thurston model, $\mu(x)=\Phi(x)$. Then,
\[
\mu^\prime(x) = \phi(x), \mu^{\prime\prime} (x) = \frac{ x}{\sqrt{2\pi}} e^{-x^2/2}.
\]
Since $\phi(x)=(2\pi)^{1/2}e^{-x^2/2}$ is an decreasing function on $|x|$, we have when $|x| \le Q_n$,
\[
 \frac{1}{2\pi} e^{ -Q_n^2/2} \le \phi(x) \le \frac{1}{2\pi}.
\]
Let $h(x)=xe^{-x^2/2}$. Then $h^\prime (x)= (1-x^2)e^{-x^2/2}$.
Therefore, when $x\in (0,1)$, $h(x)$ is an increasing function on its argument $x$; when $x\in (1, \infty)$,
$h(x)$ is an decreasing function on $x$. As a result, $h(x)$ attains its maximum value at $x=1$ when $x>0$.
Since $h(x)$ is a symmetric function,  we have $|h(x)|\le e^{-1/2}\approx 0.6$.
So
\[
b_{n0}=\frac{1}{2\pi} e^{ -(\|\beta^*\|_\infty + \epsilon_n)^2/2}, b_{n1}=\frac{1}{2\pi}, b_{n2}=  (2\pi e)^{-1/2}.
\]
In view of Theorems \ref{Theorem:con} and \ref{Theorem-central-a}, we have the following corollary.

\begin{corollary}
If $b_{n1}^4 b_{n2}/ (b_{n0}^6 \eta_n^2 ) = o( (n/\log n)^{1/2})$ and $p_n > 24\log n/n$,
then $\widehat{\beta}$ exists with probability approaching one and is uniformly consistent in the sense that
\begin{equation}
\label{eq-theorem1-beta}
\| \widehat{\beta} - \beta^* \|_\infty = O_p\left( \frac{b_{n1}^2}{b_{n0}^3\eta_n} \sqrt{\frac{\log n}{n}} \right)=o_p(1).
\end{equation}
(2) Let $V=\partial H(\beta^*)/\partial \beta$.
If $ b_{n2}b_{n1}^6 b_{n0}^{-9} = o( n^{1/2}/ \log n$,
then for fixed $k$, the vector $( (\widehat{\beta}_1 - \beta^*), \ldots,  (\widehat{\beta}_k - \beta^*_k))$
follows a $k$-dimensional multivariate normal distribution with  mean zero and the covariance matrix $\Sigma=(\sigma_{ij})_{k\times k}$
defined at \eqref{definition-sigma}.
\end{corollary}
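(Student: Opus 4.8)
The plan is to derive both parts of the corollary as direct specializations of the unified Theorems \ref{Theorem:con} and \ref{Theorem-central-a} to the normal choice $\mu=\Phi$. The only genuinely model-specific work is to confirm that the regularity conditions \eqref{ineq-mu-sign}--\eqref{ineq-mu-keyb} hold with the explicit constants $b_{n0}, b_{n1}, b_{n2}$ recorded just above the corollary, and then to check that the growth conditions of the two theorems collapse to the stated hypotheses once those constants are inserted.

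First I would verify the regularity conditions on the ball $B(\beta^*, \epsilon_n)$. Since $\mu^\prime(x)=\phi(x)>0$ for every $x$, the sign condition \eqref{ineq-mu-sign} is immediate and $\min_{i,j}\mu^\prime(\pi_{ij})>0$, so the redefinition caveat following \eqref{eq1} is unnecessary here. For \eqref{ineq-mu-keya} I would bound the arguments $\pi_{ij}=\beta_i-\beta_j$ uniformly over the ball by some $Q_n$ of order $\|\beta^*\|_\infty+\epsilon_n$ and use the monotonicity of $\phi$ in $|x|$ to read off the Gaussian-tail lower bound $b_{n0}$ and the upper bound $b_{n1}$. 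For \eqref{ineq-mu-keyb} I would maximize $|\mu^{\prime\prime}(x)|$, which up to a constant is $|x|\phi(x)$, through the auxiliary function $h(x)=xe^{-x^2/2}$ whose stationary point at $x=1$ gives $|h|\le e^{-1/2}$ and hence $b_{n2}=(2\pi e)^{-1/2}$. These are exactly the values displayed before the corollary.

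With the constants in hand, the two conclusions follow by substitution. For part (1) I would insert $b_{n0}, b_{n1}, b_{n2}$ into the hypothesis of Theorem \ref{Theorem:con} and read off the consistency rate \eqref{eq-theorem1-beta} verbatim. The supplementary requirement $p_n>24\log n/n$ plays two roles: through Lemma \ref{lemma-positive} it makes the failure probability $e^{-p_n T n}\le n^{-24T}$ that $H^\prime(\beta)$ is not positive definite polynomially small, securing the existence of $\widehat{\beta}$; and through a Chernoff bound with a union over the $n+1$ vertices it forces every subject to be compared with on the order of $np_n$ others, the degree concentration that underlies the $S$-approximation to $V^{-1}$. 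For part (2) I would likewise substitute the constants into the growth condition of Theorem \ref{Theorem-central-a} and invoke its conclusion, with covariance $\Sigma$ given by \eqref{definition-sigma}.

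The main obstacle is the bookkeeping of the exponential dependence on $\|\beta^*\|_\infty$ that enters through $b_{n0}^{-1}$, which is of order $e^{(\|\beta^*\|_\infty+\epsilon_n)^2/2}$. Because $b_{n0}$ appears to the sixth and ninth powers in the two growth conditions, the admissible sparsity $p_n$ and the merit range are tightly coupled, and one must check that the consistency radius delivered by Theorem \ref{Theorem:con} is no larger than $\epsilon_n$, so that $\widehat{\beta}$ genuinely lies in the ball on which \eqref{ineq-mu-keya} and \eqref{ineq-mu-keyb} were established. Closing this self-consistency loop, rather than any new estimate, is where the care is required.
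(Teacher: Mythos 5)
Your proposal matches the paper's own treatment: the paper establishes this corollary exactly by computing the Thurstone constants $b_{n0}=\tfrac{1}{2\pi}e^{-(\|\beta^*\|_\infty+\epsilon_n)^2/2}$, $b_{n1}=\tfrac{1}{2\pi}$, $b_{n2}=(2\pi e)^{-1/2}$ from the monotonicity of $\phi$ and the maximization of $h(x)=xe^{-x^2/2}$ at $x=1$, and then reading the two conclusions directly off Theorems \ref{Theorem:con} and \ref{Theorem-central-a}. Your extra observations on the role of $p_n>24\log n/n$ (existence via Lemma \ref{lemma-positive} and degree concentration) and on keeping the consistency radius inside $B(\beta^*,\epsilon_n)$ are correct refinements of the same substitution argument, not a different route.
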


\section{Extension to a fixed sparse design}
\label{section-fixed}

In some applications such as sports, the comparison graph may be fixed, not be random.
For example, in the regular season of the National Football League (NFL),  which teams having games are scheduled in advance.
More specially, there are $32$ teams in the two conferences of the NFL and are divided into eight divisions each consisting of four teams.
In the regular season, each team plays $16$ matches, $6$ within the division and $10$
between the divisions.
Motivated by the design,  \cite{yan2012sparse} proposed a sparse condition to control the length from one
subject to another subject with $2$ or $3$:
\[
\tau_n := \min_{0\le i<j \le n} \frac{\#\{k: t_{ik}>0, t_{jk}>0 \} }{t}.
\]
That is, $\tau_n$ is the minimum ratio of the total number of paths between any $i$ and $j$ with length $2$ or $3$.
Under the Erd\"{o}s--R\'{e}nyi comparison graph, there are similar sparsity. Specifically,
the set of the common neighbors of any two subjects $i$ and $j$ has at least the size
\[
\#\{k: t_{ik}>0, t_{jk}>0 \} \ge \tfrac{1}{2}(n-1)p_n^2,
\]
with probability at least $ 1- O(1/n)$ if $np_n^2 \ge 24\log n$; see \eqref{eq-xii-xiij-upper} in the proof of Lemma 3.

We assume that if two subjects have comparisons, they are compared $T$ times, in accordance with the aforementioned setting
for easy of exposition. Similar to Lemma \ref{lemma:inverse:appro},
 the approximate error of using $S$ to approximate $V^{-1}$ is
\[
\| V^{-1} - S \|_{\max} \le \frac{ 2T^2 b_{n1}^2  \rho_{\max} }{ b_{n0}^3 \tau_n^3 n^2 },
\]
where $\rho_{\max}=t_{\max}/n$ and $t_{\max}=\max_i t_i$.
With the similar lines of arguments as in the proofs of Theorems \ref{Theorem:con} and \ref{Theorem-central-a}, we have
the following theorem, whose proof is omitted.
\begin{theorem}\label{Theorem:fixed}
Assume that conditions \eqref{ineq-mu-sign}, \eqref{ineq-mu-keya} and \eqref{ineq-mu-keyb} hold.
(1)If $b_{n1}^4 b_{n2}/ (b_{n0}^6 \tau_n^2 ) = o( (n/\log n)^{1/2})$,
then $\widehat{\beta}$ exists with probability approaching one and is uniformly consistent in the sense that
\begin{equation}
\label{eq-theorem1-beta}
\| \widehat{\beta} - \beta^* \|_\infty = O_p\left( \frac{b_{n1}^2}{b_{n0}^3\tau_n} \sqrt{\frac{\log n}{n}} \right)=o_p(1).
\end{equation}
(2) Let $V=\partial H(\beta^*)/\partial \beta$ and  $U = (u_{ij}):= \mathrm{Var}( a |t_{ij}, 0\le i,j\le n)$.
If  $b_{n2}b_{n1}^6 b_{n0}^{-9}\tau_n^{-3} = o( n^{1/2}/\log n)$,
then for fixed $k$, the vector $( (\widehat{\beta}_1 - \beta^*), \ldots,  (\widehat{\beta}_k - \beta^*_k))$
follows a $k$-dimensional multivariate normal distribution with mean zero and the covariance matrix $\Sigma=(\sigma_{ij})_{k\times k}$, where
$\sigma_{ij}$ is given in \eqref{definition-s}.
\end{theorem}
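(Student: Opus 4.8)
The plan is to mirror the proofs of Theorems \ref{Theorem:con} and \ref{Theorem-central-a}, replacing every place where the Erd\"os--R\'enyi structure was invoked---chiefly the high-probability lower bound on the number of common neighbors and the concentration of the degrees---by its deterministic counterpart expressed through $\tau_n$ and $\rho_{\max}$. Because the design $(t_{ij})$ is now nonrandom, the only source of randomness is the conditional law $a_{ij}\sim Bin(t_{ij},\mu_{ij})$, so all probabilistic statements are taken over the win--loss outcomes alone; this removes the graph-concentration step entirely. The key analytic input, the approximation error $\|V^{-1}-S\|_{\max}$, is already bounded in terms of $\tau_n$ and $\rho_{\max}$ in the display preceding the theorem, which is the fixed-design analogue of Lemma \ref{lemma:inverse:appro} with $\tau_n$ playing the role of $p_n^2$.

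For the consistency claim (1), I would run the Newton iteration $\beta^{(k+1)}=\beta^{(k)}-[H'(\beta^{(k)})]^{-1}H(\beta^{(k)})$ started at $\beta^{(0)}=\beta^*$ and invoke the Yamamoto version of the Newton--Kantorovich theorem exactly as in Theorem \ref{Theorem:con}. Three ingredients feed the Kantorovich quantity: (i) the residual $\|H(\beta^*)\|_\infty=\max_i|a_i-\E^* a_i|$, a maximum of $n$ sums of independent bounded binomial increments, hence $O_p(\sqrt{t_{\max}\log n})$ by a Bernstein bound; (ii) the operator norm $\|V^{-1}\|_\infty$, controlled via $\|V^{-1}\|_\infty\le\max_i(1/v_{ii}+n/v_{00})+n\|V^{-1}-S\|_{\max}$ together with the deterministic lower bounds $v_{ii},v_{00}\gtrsim nT b_{n0}\tau_n$ that follow from the common-neighbor hypothesis; and (iii) the Lipschitz constant of $H'$, governed by $b_{n2}$ and the degrees. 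Assembling these as in the random case converts the hypothesis $b_{n1}^4 b_{n2}/(b_{n0}^6\tau_n^2)=o((n/\log n)^{1/2})$ into the Kantorovich condition with probability tending to one, and the Kantorovich error estimate $\|\widehat\beta-\beta^*\|_\infty\le 2\|V^{-1}\|_\infty\|H(\beta^*)\|_\infty$ then yields the stated rate.

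For the asymptotic normality claim (2), I would expand $0=H(\widehat\beta)$ to second order about $\beta^*$, obtaining $\widehat\beta-\beta^*=V^{-1}(a-\E^* a)-\tfrac12 V^{-1}R$ with $R$ the quadratic remainder, and substitute $V^{-1}=S+(V^{-1}-S)$. The representation splits into a leading term $S(a-\E^* a)$, an approximation term $(V^{-1}-S)(a-\E^* a)$, and the curvature remainder. Using $\sum_{i=0}^n(a_i-\E^* a_i)=0$, the $i$-th coordinate of the leading term collapses to $(a_i-\E^* a_i)/v_{ii}-(a_0-\E^* a_0)/v_{00}$; its two pieces are asymptotically independent because they share only the single $i$--$0$ comparison, which reproduces the diagonal variance $\sigma_{ii}=u_{ii}/v_{ii}^2+u_{00}/v_{00}^2$, while for $i\neq j$ only the common piece survives and gives $\sigma_{ij}=u_{00}/v_{00}^2$. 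A multivariate Lyapunov central limit theorem applied to the fixed coordinates $1,\ldots,k$ of $S(a-\E^* a)$, whose summands are independent and bounded, then delivers the limiting normal law with covariance $\Sigma$.

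The main obstacle is showing the two error terms are negligible after normalization. The approximation term is dispatched by the given bound on $\|V^{-1}-S\|_{\max}$, through its $\tau_n^{-3}$ and $\rho_{\max}$ dependence. The harder piece is the curvature remainder: one must bound $\|R\|_\infty\lesssim b_{n2}\,t_{\max}\,\|\widehat\beta-\beta^*\|_\infty^2$ and combine it with the consistency rate from part (1), and it is precisely here that the sharper moment condition $b_{n2}b_{n1}^6 b_{n0}^{-9}\tau_n^{-3}=o(n^{1/2}/\log n)$ is consumed, forcing the normalized remainder to be $o_p(1)$ uniformly over the fixed coordinates. Verifying this uniform negligibility---rather than any individual concentration estimate---is the crux, and it is the step most sensitive to the substitution of $\tau_n$ for $p_n^2$, since the deterministic design offers no slack from graph concentration.
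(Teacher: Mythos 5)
Your overall architecture matches what the paper intends for this omitted proof: run the Newton--Kantorovich argument of Theorem \ref{Theorem:con} and the Taylor/$S$-approximation/CLT argument of Theorem \ref{Theorem-central-a}, with the deterministic quantities $\tau_n,\rho_{\max}$ replacing the Erd\H{o}s--R\'enyi concentration bounds (and with $\tau_n$ playing the role of $p_n^2$), and your collapse of the leading term $[S(a-\E^* a)]_i=(a_i-\E^* a_i)/v_{ii}-(a_0-\E^* a_0)/v_{00}$ via $\sum_{i=0}^n(a_i-\E^* a_i)=0$ is exactly right. However, the concrete bounds you propose to feed into this architecture fail, and for a structural reason. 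The row sums of $S$ are $1/v_{ii}+n/v_{00}$, and since $v_{00}\asymp nTb_{n0}\tau_n$ the term $n/v_{00}$ is of order $1/(Tb_{n0}\tau_n)$ --- it does \emph{not} vanish as $n\to\infty$ (indeed $n/v_{00}\ge 1/(b_{n1}T)$ always). Consequently your ingredient (ii), and your final estimate $\|\widehat\beta-\beta^*\|_\infty\le 2\|V^{-1}\|_\infty\|H(\beta^*)\|_\infty$, yield only $O_p\bigl(\sqrt{t_{\max}\log n}\,\bigr)=O_p\bigl(\sqrt{n\log n}\,\bigr)$, which diverges; the Kantorovich quantity $\eta$ computed as $\|V^{-1}\|_\infty\|H(\beta^*)\|_\infty$ likewise diverges, so $K\eta\le 1/2$ cannot be verified and part (1) collapses. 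The paper never uses $\|V^{-1}\|_\infty$ as a standalone factor: it exploits the identity $\sum_{i=0}^n H_i(\beta)\equiv 0$ (hence $\sum_{i=1}^n H_i=-H_0$ and $\sum_{i=1}^n H_i^\prime=-H_0^\prime$), which converts the dangerous piece $(1/v_{00})\sum_{j=1}^n(\cdot)_j$ of $[S(\cdot)]_i$ into the single term $-(\cdot)_0/v_{00}$. This cancellation is precisely where a factor of $n$ is saved and where the rate $\sqrt{\log n/n}$ comes from; without it the stated conclusion is unreachable.

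The same gap recurs twice in your part (2). For the curvature remainder, $[SR]_i$ contains $(\sum_{j=1}^n R_j)/v_{00}$, and the crude bound $n\|R\|_\infty/v_{00}$ is of order $\log n$ (with constant $b$'s, $\tau_n$, $\rho_{\max}$), nowhere near the required $o_p(n^{-1/2})$; the paper controls $\sum_{j}g_j$ by combining $\sum_{i=0}^n(a_i-\E^* a_i)=0$ with a second-order expansion of $H_0$, showing $\sum_j g_j$ has the size of a \emph{single} $g_j$ (see the derivation around \eqref{eq-sum-R}), and you would need the fixed-design analogue of that step. For the approximation term, dispatching $(V^{-1}-S)(a-\E^* a)$ ``by the given bound on $\|V^{-1}-S\|_{\max}$'' alone is also insufficient: the direct estimate $n\|V^{-1}-S\|_{\max}\|a-\E^* a\|_\infty$ is $O_p\bigl(\sqrt{\log n/n}\,\bigr)$, off by a $\sqrt{\log n}$ factor from $o_p(n^{-1/2})$ even when all parameters are constant, in which case the theorem's hypothesis holds trivially. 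This term requires a second-moment argument --- the fixed-design analogue of Lemma \ref{lemma-cov-wh}, i.e.\ bounding the conditional covariance of $(V^{-1}-S)H$ and applying Chebyshev --- which your proposal neither states nor proves. In short: right skeleton, but the three places where you substitute generic operator-norm bounds for the paper's structured, cancellation-based bounds are exactly the places where the proof lives.
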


\section{Numerical Studies}
\label{section:simulation}

In this section, we evaluate the asymptotic results of the moment estimator in the
Thurstone model through simulation studies and a real data example.

\subsection{Simulation studies}

We carry out simulations to evaluate the finite sample
performance of the moment estimator in the Thurstone model.
We set $T = 1$, which means that any pair has one comparison with probability
$p_n$ and no comparison with probability $1-p_n$.
Let $c$ be a constant. We set the merit parameters to be a linear form, i.e.,
$\beta_i^* = ic\log n/n$ for $i=1, \ldots, n$, where $\beta_0^*=0$.
We considered four different values for $c$ as $c=0.3, 0.5, 0.8$.
By allowing $\| \beta^*\|_\infty $ to grow with $n$, we intended to assess the asymptotic properties under different asymptotic regimes.

In order to see how small $p_n$ could be,
we first evaluate the fail frequency that the ``win-loss" graph $G_n$ is strongly connected, which is the necessary condition to guarantee the
moment estimator exists. We set $c$ to be fixed with $c=0.4$. The results are shown in Table \ref{Table:fail} with $1,000$.
We can see that the necessary condition did not hold in each simulation when $p_n=\log n/n$ while it holds with almost $100\%$ frequency when $p_n=(\log n/n )^{1/2}$.
This shows that it is necessary to control the rate of $p_n$ tending to zero.

{\renewcommand{\arraystretch}{1}
\begin{table}[!h]\centering
\caption{The fail frequency ($\times 100\%$).}
\label{Table:fail}
\begin{tabular}{clll}
\hline
$p_n$                       & $n=100$ & $n=500$ & $n=1000$  \\
\hline
$(\log n/n )^{1/2}$         & $  0.5 $ & $ 0 $ & $ 0 $  \\
$(\log n/n )^{2/3}$         & $ 25.8  $&$ 0.2 $&$ 0 $  \\
$\log n/n $                 & $ 100  $&$ 100 $&$ 100 $ \\
\hline
\end{tabular}
\end{table}
}

By Theorem \ref{Theorem-central-a},
$\hat{\xi}_{ij} = [\widehat{\beta}_i-\widehat{\beta}_j-(\beta_i^*-\beta_j^*)]/(\hat{u}_{ii}/\hat{v}_{ii}^2 + \hat{u}_{00}/\hat{v}_{00}^2)^{1/2}$
 converges in distribution to the standard normality, where $\hat{u}_{ii}$ and $\hat{v}_{i,i}$ are the estimates of $u_{ii}$ and $v_{ii}$
by replacing $\beta^*$ with $\widehat{\beta}$.
Therefore, we assessed the asymptotic normality of $\hat{\xi}_{ij}$ via the coverage probability of the $95\%$ confidence interval and the length of the confidence interval.
The times that $\widehat{\beta}$ failed to exist were also recorded.
Two values  $n=100$ and $n=200$  were considered for the number of subjects.
Each simulation was repeated $1,0000$ times.

The simulation results are shown in Table \ref{Table:alpha}.
When $p_n=(\log n/n)^{1/4}$, all simulated coverage probabilities are very close to the target level $95\%$.
On the other hand, when $p_n=(\log n/n)^{1/2}$, they are a little lower than the normal level in the case $n=100$
and are very close to $95\%$ in the case $n=200$.
The length of the confidence interval decreases as $n$ increases, which qualitatively agrees with the theory.
It is also expected that the length of the confidence interval increases as $p_n$ decreases when $n$ is fixed.
Another phenomenon is that the length of confidence interval under three distinct $c$  has little difference  when $n$ and $p_n$ are fixed.

{\renewcommand{\arraystretch}{1}
\begin{table}[!h]\centering
\caption{The reported values are the coverage frequency ($\times 100\%$) for $\beta_i-\beta_j$ for a pair $(i,j)$ / length of the confidence interval / fail probabilities ($\times 100\%$).}
\label{Table:alpha}
\begin{tabular}{clc cc}
\hline
\multicolumn{5}{c}{ $p_n= (\log n/n )^{1/4}$ } \\
\hline
n       &  $i$ & $c=0.2$ & $c=0.5$ & $c=0.8$  \\
\hline
100         &$(1,2)     $&$94.5 / 1.04 / 0 $&$ 94.62 / 1.06 / 0 $&$ 94.62 / 1.06 / 0 $  \\
            &$(49,50)   $&$94.97 / 1.04 / 0 $&$ 94.34 / 1.04 / 0 $&$ 94.34 / 1.04 / 0 $  \\
            &$(99,100)  $&$95.12 / 1.04 / 0 $&$ 94.99 / 1.06 / 0 $&$ 94.99 / 1.06 / 0 $  \\
&&&&\\
200         &$(1,2)       $&$95.18 / 0.78 / 0 $&$ 94.71 / 0.79 / 0 $&$ 94.71 / 0.79 / 0 $ \\
            &$(99,100)    $&$95.14 / 0.78 / 0 $&$ 94.49 / 0.78 / 0 $&$ 94.49 / 0.78 / 0 $  \\
            &$(199,200)   $&$94.82 / 0.78 / 0 $&$ 94.64 / 0.79 / 0 $&$ 94.64 / 0.79 / 0 $  \\
\hline
\multicolumn{5}{c}{ $p_n= (\log n/n )^{1/2}$ } \\
\hline
n       &  $i$ & $c=0.2$ & $c=0.4$ & $c=0.6$  \\
\hline
100         &$(1,2)     $&$93.28 / 1.58 / 0.19 $&$ 93.48 / 1.60 / 0.49 $&$ 93.48 / 1.60 / 1.15 $  \\
            &$(49,50)   $&$93.85 / 1.58 / 0.19 $&$ 93.94 / 1.58 / 0.49 $&$ 93.94 / 1.58 / 1.15 $  \\
            &$(99,100)  $&$93.80 / 1.58 / 0.19 $&$ 93.96 / 1.61 / 0.49 $&$ 93.96 / 1.61 / 1.15 $   \\
&&&&\\
200         &$(1,2)       $&$94.04 / 1.26 / 0 $&$ 93.89 / 1.28 / 0 $&$ 93.89 / 1.28 / 0 $ \\
            &$(99,100)    $&$94.14 / 1.26 / 0 $&$ 94.22 / 1.26 / 0 $&$ 94.22 / 1.26 / 0 $  \\
            &$(199,200)   $&$94.38 / 1.26 / 0 $&$ 93.98 / 1.28 / 0 $&$ 93.98 / 1.28 / 0 $  \\
\hline
\end{tabular}
\end{table}
}

\subsection{A real data example}

We use the 2018  NFL regular season data as an illustrated example,  which is available from
\url{https://www.espn.com/nfl/schedule/_/year/2018}.
The NFL league consists of thirty-two teams that are divided evenly into two conferences and
each conference has four divisions that have four teams each. In the regular
season, each team plays with three intra-division teams (each twice) and ten games
with ten inter-division teams (each once). As discussed in \cite{yan2012sparse},
the design of the NFL regular season satisfies the sparsity condition of the fixed comparison graph,
where $\tau_n = 1/16$. We removed two ties before our analysis.
The fitted merits that were obtained from fitting the
Thurstone model for the remaining data are given in Table \ref{Table:beta:real}, where we used ``Arizona Cardinals" with the smallest number of wins as the baseline (with $\hat{\beta}_0 = 0$).

It is interesting to compare the ordering of six playoff seeds of the two conferences
by the NFL rule with the ordering by their fitted  merits in Table \ref{Table:beta:real}.
The NFL rule are based on the regular season won-lost percentage record
(PCT) and can be briefly summarized as follows: the
teams in each division with the best PCT are seeded one through four; another two teams from
each conference are seeded five and six based on their PCT.
The six playoff seeds in the American Football Conference from No.1 to No.6 based on the PCT are
Kansas City Chiefs, New England Patriots, Pittsburgh Steelers,
Houston Texans, Los Angeles Chargers, Indianapolis Colts; while
the selected teams  based on the fitted merits  are Kansas City Chiefs, New England Patriots, Houston Texans, Baltimore Ravens,
Los Angeles Chargers,  Pittsburgh Steelers.
The corresponding six playoff seeds in the National Football Conference based on the PCT are
Los Angeles Rams, New Orleans Saints, Chicago Bears, Washington Redskins,
Seattle Seahawks, Carolina Panthers; while the selected teams base on the fitted merits are
Los Angeles Rams, New Orleans Saints, Chicago Bears, Dallas Cow boys,
Seattle Seahawks, Philadelphia Eagles.
As we can see, the selected top three teams based on the PCT and the fitted merits in each conference are the same
and the selected teams from No.4, No.5 and No.6 are not all the same.

{\renewcommand{\arraystretch}{1}
\begin{table}[!hbt]\centering
\scriptsize
\caption{The fitted merit $\hat{\beta}_i$, the number of wins $a_i$, and the standard error $\hat{\sigma}_i$.}
\label{Table:beta:real}
\begin{tabular}{llccc | llccc }
\hline
\multicolumn{5}{c|}{American Football Conference}    & \multicolumn{5}{c}{National Football Conference} \\
division   & team     &  $\hat{\beta}_i$  &  $a_i$ & $\hat{\sigma}_i$   &  division   & team     &  $\hat{\beta}_i$  &  $a_i$ & $\hat{\sigma}_i$ \\
East       & New England Patriots  & $ 1.452 $&$ 11 $&$ 0.519 $ &          East       & Dallas Cow boys      & $ 1.284 $&$ 10 $&$ 0.512 $  \\
           & New York Jets         & $ 0.338 $&$ 4 $&$ 0.530 $  &                      & Philadelphia Eagles & $ 1.193 $&$ 9 $&$ 0.511 $ \\
           & Miami Dophins         & $ 0.718 $&$ 7 $&$ 0.509 $ &                      & New York Giants     & $ 0.423 $&$ 5 $&$ 0.514 $ \\
           & Buffalo Bills         & $ 0.673 $&$ 6 $&$ 0.514 $ &                      & Washington Redskins & $ 0.756 $&$ 7 $&$ 0.511 $ \\
North      & Baltimore Ravens      & $ 1.382 $&$ 10 $&$ 0.514 $ &           North      & Chicago Bears       & $ 1.494 $&$ 12 $&$ 0.532 $ \\
           & Cincinnati Bengals    & $ 0.769 $&$ 6 $&$ 0.514 $ &                     & Green Bay Packers   & $ 0.595 $&$ 6 $&$ 0.522 $ \\
           & Pittsburgh Steelers   & $ 1.337 $&$ 9 $&$ 0.52 $ &                      & Minnesota Vikings   & $ 1.059 $&$ 8 $&$ 0.523 $  \\
           & Cleveland Browns      & $ 0.968 $&$ 7 $&$ 0.519 $ &                      & Detroit Lions       & $ 0.579 $&$ 6 $&$ 0.516 $ \\
South      & Indianapolis Colts    & $ 1.244 $&$ 10 $&$ 0.512 $ &          South      & New Orleans Saints  & $ 1.908 $&$ 13 $&$ 0.544 $ \\
           & Houston Texans        & $ 1.430 $&$ 11 $&$ 0.516 $ &                      & Atlanta Falcons     & $ 0.767 $&$ 7 $&$ 0.512 $ \\
           & Tennessee Titans      & $ 1.205 $&$ 9 $&$ 0.51 $ &                       & Carolina Panthers   & $ 0.840 $&$ 7 $&$ 0.511 $ \\
           & Jacksonville Jaguars         & $ 0.591 $&$ 5 $&$ 0.517 $ &                      & Tampa Bay Buccaneers& $ 0.506 $&$ 5 $&$ 0.52 $ \\
West       & Kansas City Chiefs    & $ 1.762 $&$ 12 $&$ 0.537 $ &            West       & Los Angeles Rams &$ 1.963 $&$ 13 $&$ 0.559 $ \\
           & Denver Broncos        & $ 0.713 $&$ 6 $&$ 0.523 $&                        & San Francisco 49ers        & $ 0.166 $&$ 4 $&$ 0.54 $ \\
           & Oakland Raiders       & $ 0.365 $&$ 4 $&$ 0.537 $&                       & Seattle Seahawks & $ 1.305 $&$ 10 $&$ 0.525 $ \\
           & Los Angeles Chargers  & $ 1.748 $&$ 12 $&$ 0.536 $ &                      & Arizona Cardinals &$ 0 $&$ 3 $&$ 0.555 $  \\
\hline
\end{tabular}

\end{table}

\section{Summary and discussion}
\label{section:sd}

We have presented the moment  estimation based on the scores of subjects in the paired comparison model under sparse comparison graphs.
We have established the uniform consistency and asymptotic normality of the moment estimator.
The consistency is shown via obtaining the convergence rate of the Newton iterative sequence.
This leads a condition on the sparsity parameter $p_n$ requiring that $p_n \ge O((\log n/n)^{1/8})$ if $\beta^*$ is a constant vector.
We note that this condition looks much stronger than that in the Bradley-Terry model in \cite{chen2019aoap}.
Since we consider a general model, it would seem to be suitable that a more severe condition is imposed.
On the other hand,  the condition imposed on $b_{n0}$ may not be best possible.
In particular, the conditions for guaranteeing the asymptotic normality seem stronger than those needed for the consistency.
Note that the asymptotic behavior of the moment estimator depends not only on $b_{n0}$, but also on the configuration of all parameters.
It would be of interest to investigate whether these conditions could be relaxed.

In this paper, we assume that given the comparison graph, all paired comparisons are independent. 
Note that the moment equation is regardless of whether comparisons are independent or not.
When comparisons are not independent, the moment estimation still works.
The consistency result in Theorem \ref{Theorem:con} still hold
as long as there are the same order of the upper bound of $a_i - \E^* a_i$ in Lemma \ref{lemma-a-upper-bound}.
In fact, the independence assumption is not directly used through checking our proofs.
It is only used in Lemma \ref{lemma-a-upper-bound} to derive the upper bound of $a_i - \E^* a_i$ by using the Hoeffding inequality.
Analogously,  the independence assumption is used to derive the central limit theorem of $a_i - \E^* a_i$.
In the dependence case, there are also a lot of Hoeffding-type exponential tail inequalities [e.g., \cite{Delyon:2009,Roussas1996,Ioannides1999423}]
and cental limit theorems for sums of a sequence of random variables (e.g. \cite{cocke1972,cox1984}) to apply.
We hope that the methods developed here can be applied to dependent paired comparison models in the future.

\section{Appendix A}
\label{section:appendix}

In this section, we present the proofs of theorems.

\subsection{Preliminaries}

In this section, we state two preliminary results, which will be used in the proofs.
The first result is the optimal error bound in the Newton method in \cite{Yamamoto1986} under the Kantorovich
 conditions [\cite{Kantorovich1948Functional}].

\begin{lemma}[\cite{Yamamoto1986}]\label{lemma:Newton:Kantovorich}
Let $X$ and $Y$ be Banach spaces, $D$ be an open convex subset of $X$ and
$F:D \subseteq X \to Y$ be Fr\'{e}chet differentiable.
Assume that, at some $x_0 \in D$, $F^\prime(x_0)$ is invertible and that
\begin{eqnarray}
\label{eq-kantororich-a}
\| F^\prime(x_0)^{-1} ( F^\prime(x) - F^\prime(y))\| \le K\|x-y\|,~~ x, y\in D, \\
\label{eq-kantororich-b}
\| F^\prime(x_0)^{-1} F(x_0) \| \le \eta, h=K\eta \le 1/2, \\
\nonumber
\bar{S}(x_0, t^*) \subseteq D, t^*=2\eta/( 1+ \sqrt{ 1-2h}).
\end{eqnarray}
Then:
(1) The Newton iterates $x_{n+1} = x_n - F^\prime (x_n)^{-1} F(x_n)$, $n\ge0$ are well-defined,
lie in $\bar{S}(x_0, t^*)$ and converge to a solution $x^*$ of $F(x)=0$. \\
(2) The solution $x^*$ is unique in $S(x_0, t^{**})\cap D$, $t^{**}=(1 + \sqrt{1-2h})/K$ if $2h<1$
and in $\bar{S}(x_0, t^{**})$ if $2h=1$. \\
(3) $\| x^* - x_n \| \le t^*$ if $n=0$ and $\| x^* - x_n \| \le 2^{1-n} (2h)^{ 2^n -1 } \eta $ if $n\ge 1$.
\end{lemma}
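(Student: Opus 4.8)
The plan is to prove this by the classical \emph{majorant} method of Kantorovich, with Yamamoto's sharpened error estimate obtained from an explicit analysis of the associated scalar Newton sequence. First I would introduce the scalar quadratic majorant
\[
p(t) = \frac{K}{2}t^2 - t + \eta,
\]
whose roots are $t^* = (1-\sqrt{1-2h})/K$ and $t^{**} = (1+\sqrt{1-2h})/K$ (they coincide when $2h=1$); one checks directly that $t^* = 2\eta/(1+\sqrt{1-2h})$ as in the statement. Running Newton's method on $p$ from $t_0 = 0$, namely $t_{n+1} = t_n - p(t_n)/p'(t_n)$, yields a nondecreasing sequence that converges monotonically up to $t^*$ (here $p'(t)=Kt-1<0$ on $[0,t^*]$, so each step is well defined and $t_0=0\le t_1=\eta\le t^*$).

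The heart of the argument is a majorization induction showing, for every $n\ge 0$, that the Newton iterate $x_n$ is well defined and lies in $\bar{S}(x_0,t^*)$, that $F'(x_n)$ is invertible with $\|F'(x_n)^{-1}F'(x_0)\| \le -1/p'(t_n)$, and that the steps are dominated by the scalar steps, $\|x_{n+1}-x_n\| \le t_{n+1}-t_n$. Invertibility of $F'(x_n)$ would be propagated from that of $F'(x_0)$ through the Banach perturbation lemma together with the affine-covariant Lipschitz bound \eqref{eq-kantororich-a}, while the step domination follows from writing the residual $F(x_n)$ in integral Taylor form and estimating it using \eqref{eq-kantororich-a} and \eqref{eq-kantororich-b}. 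Since $\{t_n\}$ converges, the domination forces $\{x_n\}$ to be Cauchy, hence $x_n \to x^*$ with $x^* \in \bar{S}(x_0,t^*)$, and continuity of $F$ with the vanishing residual bound gives $F(x^*)=0$. Uniqueness in $S(x_0,t^{**})\cap D$ (respectively $\bar{S}(x_0,t^{**})$ when $2h=1$) I would obtain by comparing any second zero against the majorant sequence and again invoking \eqref{eq-kantororich-a}. This settles parts (1), (2), and the $n=0$ case of (3).

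For the optimal bound in part (3), the key observation is that Newton's method on a quadratic admits a closed form. Setting $\rho_n = (t_n - t^*)/(t_n - t^{**})$, a one-line computation gives $\rho_{n+1} = \rho_n^2$, so $\rho_n = \theta^{2^n}$ with $\theta = t^*/t^{**} = (1-\sqrt{1-2h})/(1+\sqrt{1-2h})$. Solving for the scalar error then yields
\[
t^* - t_n = \frac{\theta^{2^n}(t^{**}-t^*)}{1-\theta^{2^n}},
\]
and since the induction gives $\|x^*-x_n\| \le t^*-t_n$, it remains only to bound the right-hand side. The elementary inequality $\theta \le 2h$, which follows from $(1-s)/(1+s) \le 1-s^2$ with $s=\sqrt{1-2h}$, together with $t^{**}-t^* = 2\sqrt{1-2h}/K$ and $\eta = h/K$, reduces the estimate to controlling the factor $(t^{**}-t^*)/(1-\theta^{2^n})$ and extracting exactly $2^{1-n}(2h)^{2^n-1}\eta$. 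I expect this last algebraic step to be the main obstacle: the bound replaces the naive exponent $2^n$ by $2^n-1$ and inserts the precise constant $2^{1-n}\eta$, so getting the sharp form requires a careful, nonobvious manipulation of the closed-form scalar error rather than a crude geometric estimate, whereas the existence and uniqueness parts are comparatively routine once the majorization induction is in place.
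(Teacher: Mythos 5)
This lemma is not proved in the paper at all: it is imported verbatim from Yamamoto (1986) as a black-box preliminary (the paper's ``proof'' is the citation), so there is no internal argument to compare yours against. Your proposal is the classical Kantorovich majorant route, sharpened in the Gragg--Tapia/Yamamoto style, and it is correct in outline: the roots of $p(t)=\tfrac{K}{2}t^2-t+\eta$ are as you state, the majorization induction (Banach perturbation lemma for invertibility of $F'(x_n)$ with $\|F'(x_n)^{-1}F'(x_0)\|\le 1/(1-Kt_n)=-1/p'(t_n)$, integral Taylor form for the residual, step domination $\|x_{n+1}-x_n\|\le t_{n+1}-t_n$) is the standard and valid argument for parts (1)--(2), and the ratio identity $\rho_{n+1}=\rho_n^2$ with $\rho_n=(t_n-t^*)/(t_n-t^{**})$ does give the closed form $t^*-t_n=\theta^{2^n}(t^{**}-t^*)/(1-\theta^{2^n})$, $\theta=t^*/t^{**}$. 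Two remarks on the part you flagged. First, the ``main obstacle'' is more elementary than you fear: writing $s=\sqrt{1-2h}$ and $N=2^n$, the claimed bound $t^*-t_n\le 2^{1-n}(2h)^{2^n-1}\eta$ reduces, after substituting $t^{**}-t^*=2s/K$, $\theta=(1-s)/(1+s)$, $2h=1-s^2$, $\eta=(1-s^2)/(2K)$, to
\begin{equation*}
2Ns \;\le\; (1+s)^N\left[(1+s)^N-(1-s)^N\right],
\end{equation*}
which follows at once from $(1+s)^N-(1-s)^N=2\sum_{k\ \mathrm{odd}}\binom{N}{k}s^k\ge 2Ns$ and $(1+s)^N\ge 1$. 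Second, a small gap: your closed form is indeterminate when $2h=1$ (then $\theta=1$), so that case needs separate treatment; there $p(t)=\tfrac{K}{2}(t-t^*)^2$ gives $t^*-t_n=2^{-n}t^*=2^{1-n}\eta$, which matches the stated bound with equality. With these two points supplied, your plan closes, and it has the virtue of making self-contained a result the paper only cites.
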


The second result is the approximate error of using $S$ to approximate $V^{-1}$, whose proof is in the Supplementary Material.

\begin{lemma}\label{lemma:inverse:appro}
If $p_n \ge 24\log n/n$, then for sufficiently large $n$,  with probability at least
 $ 1-  O(n^{-1})$, we have
\[
\| V^{-1} - S \|_{\max} \le \frac{12Tb_{n1}^2 }{ b_{n0}^3 n(n-1)p_n^3  }.
\]
\end{lemma}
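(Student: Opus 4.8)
The plan is to reduce the entrywise control of $V^{-1}-S$ to an explicit perturbation series and then tame that series using concentration of the comparison graph. Throughout write $d_{ij}:=t_{ij}\mu'(\pi_{ij})=-v_{ij}$ for $i\neq j$, $D:=\diag(v_{11},\dots,v_{nn})$, and $B:=(d_{ij})$ with $d_{ii}=0$, so that $V=D-B$ and $S=D^{-1}+v_{00}^{-1}\mathbf{1}\mathbf{1}^\top$, where $\mathbf{1}$ is the all-ones vector. First I would put $S^{-1}$ in closed form by the Sherman--Morrison formula, obtaining $S^{-1}=D-T_0^{-1}ww^\top$ with $w:=(v_{11},\dots,v_{nn})^\top$ and $T_0:=v_{00}+\sum_{k}v_{kk}$; hence the defect matrix $G:=S^{-1}-V=B-T_0^{-1}ww^\top$ and, after simplification, the clean identities
\[
SG=I-SV=D^{-1}B-\tfrac{1}{v_{00}}\mathbf{1}(d^{(0)})^\top,\qquad (SG\mathbf{1})_i=-\tfrac{d_{i0}}{v_{ii}},
\]
where $d^{(0)}:=(d_{10},\dots,d_{n0})^\top$. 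The point of the second identity is that although $\|SG\|_\infty$ is of order one, each row of $SG$ sums to the tiny quantity $-d_{i0}/v_{ii}$, which is the source of all the cancellation below.

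Second, I would fix a column index $j$ and deduce from these identities that $(V^{-1}-S)e_j=V^{-1}r^{(j)}$ with $r^{(j)}:=(I-VS)e_j$, i.e.\ $r^{(j)}_i=d_{ij}/v_{jj}-d_{i0}/v_{00}$. Two features make this vector favorable: $\|r^{(j)}\|_1\le 2$, and it is nearly balanced, $\mathbf{1}^\top r^{(j)}=-d_{j0}/v_{jj}$. Writing $Q:=D^{-1}B$, a substochastic random-walk matrix with $Q\ge 0$ and $(Q\mathbf{1})_i=1-d_{i0}/v_{ii}\le 1$, the system $VR^{(j)}=r^{(j)}$ unfolds into the exact recursion $R^{(j)}=R_0^{(j)}+QR^{(j)}$, whence $R^{(j)}=\sum_{m\ge 0}Q^mR_0^{(j)}$, where $R_0^{(j)}=D^{-1}r^{(j)}$ obeys $\|R_0^{(j)}\|_\infty\le Tb_{n1}/v_-^2$ with $v_-:=\min\{\min_i v_{ii},\,v_{00}\}$.

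Third, I would install the probabilistic scaffolding. By Chernoff bounds for the binomials $t_{ik}\sim Bin(T,p_n)$ and a union bound over the $O(n)$ vertices, the hypothesis $p_n\ge 24\log n/n$ gives, with probability $1-O(n^{-1})$, the two-sided degree control $v_-\ge\tfrac12 b_{n0}nTp_n$ and $\max_i v_{ii},\,v_{00}\le 2b_{n1}nTp_n$. The same tools, together with the common-neighbor estimate $\#\{k:t_{ik}>0,\,t_{jk}>0\}\ge\tfrac12(n-1)p_n^2$ already quoted in the excerpt, control the weighted sums $\sum_{\ell}d_{i\ell}d_{\ell m}$; these are exactly the quantities appearing in $Q^2$, and they deliver the spectral gap of $Q$ on the mean-zero subspace that the final step needs.

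The main obstacle is this final step: bounding $\|R^{(j)}\|_\infty=\|\sum_{m\ge 0}Q^mR_0^{(j)}\|_\infty$ without losing a spurious factor $n$. The naive estimate $|R^{(j)}_i|\le\|R_0^{(j)}\|_\infty\,((I-Q)^{-1}\mathbf{1})_i$ fails because $(I-Q)^{-1}\mathbf{1}=V^{-1}w$ is of order $n$ (the killing rate of the walk is only $\asymp 1/n$, so its absorption time is $\asymp n$). The remedy is to exploit the near-balance $\mathbf{1}^\top r^{(j)}\approx 0$: decompose $R_0^{(j)}$ into its mean and a mean-zero remainder, note that the constant part is annihilated up to the $O(d_{i0}/v_{ii})$ row-defect of $SG$, and control $Q^m$ on the mean-zero remainder by the common-neighbor/spectral-gap estimate of the previous step, so that $\sum_{m\ge 1}Q^mR_0^{(j)}$ contracts geometrically and contributes only an $O(1/p_n)$-type amplification rather than $O(n)$. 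Combining this amplification with $\|R_0^{(j)}\|_\infty\le Tb_{n1}/v_-^2$ and the degree bounds, then maximizing over $i,j$, yields $\|V^{-1}-S\|_{\max}\le 12Tb_{n1}^2/(b_{n0}^3 n(n-1)p_n^3)$ on the high-probability event, which is the assertion.
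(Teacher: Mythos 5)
Your algebraic scaffolding is correct and is essentially a column-wise reformulation of the identity the paper itself starts from: $(V^{-1}-S)e_j=V^{-1}(I-VS)e_j$ is the same object as the paper's recursion $Z=ZX+Y$ with $X=I-VS$, $Y=SX$, and your probabilistic inputs (Chernoff bounds on degrees, the common-neighbor count $\xi_{ij}\ge\tfrac12(n-1)p_n^2$) coincide with the paper's Part I. The gap is exactly the step you flag as "the main obstacle": the geometric contraction of $\sum_{m\ge1}Q^mR_0^{(j)}$ is asserted, not proved, and it is where all the difficulty lives. Concretely: (i) $Q=D^{-1}B$ is substochastic and non-symmetric; constants are nearly \emph{preserved} by $Q$, not annihilated, the killing rates $d_{i0}/v_{ii}$ vanish for every $i$ not adjacent to subject $0$ (which is most $i$ when $p_n\to0$), and the mean-zero subspace is not $Q$-invariant, so your decomposition must be redone after every application of $Q$ with the newly generated constant components tracked across the $\asymp n$ effective steps of the series. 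Note that the oscillation functional is \emph{not} contracted by a substochastic matrix --- applying $Q$ to $\mathbf{1}$ takes the oscillation from $0$ to $\max_i d_{i0}/v_{ii}-\min_i d_{i0}/v_{ii}>0$ --- so the naive Doeblin statement you would want is false as stated and needs a genuine substitute. (ii) Worse, if one completes the contraction the way you indicate (entrywise lower bounds on $Q^2$ from common neighbors), the $p_n$'s cancel between $\xi_{ij}\gtrsim np_n^2$ and $v_{ii}v_{\ell\ell}\lesssim(nTp_nb_{n1})^2$: the gap per two steps is $\asymp b_{n0}^2/(T^2b_{n1}^2)$, the amplification is $O(T^2b_{n1}^2/b_{n0}^2)$ --- not the "$O(1/p_n)$-type" you assert --- and the final bound comes out of order $Tb_{n1}^3/(b_{n0}^4n^2p_n^2)$. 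That has different powers of $b_{n1}/b_{n0}$ and of $p_n$ than the claimed $12Tb_{n1}^2/(b_{n0}^3n(n-1)p_n^3)$; it is stronger when $b_{n1}p_n/b_{n0}$ is small but strictly weaker otherwise, so your concluding sentence does not follow from the steps you outline.

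For contrast, the paper sidesteps the Neumann series entirely with the Simons--Yao maximum-principle argument: fix a row $i$ of $Z=V^{-1}-S$, let $\alpha,\beta$ be the argmax and argmin of $z_{ik}$ over $k$, first show $z_{i\beta}\le-q_{ii}/(v_{ii}v_{00})\le0$, then apply the recursion once at columns $\alpha$ and $\beta$ and cancel the terms indexed by common neighbors of $\alpha,\beta$ in pairs, obtaining $z_{i\alpha}\le z_{i\alpha}-z_{i\beta}\le\frac{Mt_{\max}}{m}\,\xi_{\alpha\beta}^{-1}\,\frac{2M}{m^2t_{\min}^2}$. This one-step cancellation carries only a single factor $q/v$ in the effective gap, i.e.\ gap $\asymp p_nb_{n0}/(T^2b_{n1})$ and amplification $O(1/p_n)$, which is precisely what produces the stated $p_n^{3}$ in the denominator, and it requires no iteration bookkeeping at all. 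If you want to salvage your route, you would need a contraction argument whose gap keeps one factor of $q/v$ linear (a one-step, not two-step, use of common neighbors) and a rigorous treatment of the leakage terms; at that point you have essentially reconstructed the paper's proof.
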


\subsection{Proof of Theorem  \ref{Theorem:con}}

We aim to show Theorem  \ref{Theorem:con} by obtaining the convergence rate
of the Newton iterative sequence in view of Lemma \ref{lemma:Newton:Kantovorich}, which requires to
verify the  Kantovorich conditions \eqref{eq-kantororich-a} and \eqref{eq-kantororich-b}.
Condition \eqref{eq-kantororich-a} depends on the Lipschitz continuous of $H_i^\prime(\beta)$.
Recall that $t_{\max}=\max_{i=0,\ldots, n} t_i$ and $t_{\min}=\min_{i=0,\ldots, n} t_i$.

\begin{lemma}\label{lemma:lipschitz-c}
Let $D=B(\beta^*, \epsilon_{n}) (\subset \R^{n})$ be an open convex set containing the true point $\beta^*$.
For any given set $\{ t_{ij}, 0\le i, j \le n\}$, if inequality \eqref{ineq-mu-keyb} holds, then
\[
\max_{i=0, \ldots, n} \| H_i^\prime(x) - H_i^\prime(y)  \|_1 \le 4b_{n2}t_{\max}  \|x - y \|_\infty.
\]
\end{lemma}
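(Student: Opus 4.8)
The plan is to prove the bound entrywise: write out the Jacobian row $H_i'(\cdot)$ explicitly, control the increment of each entry using the second-derivative bound \eqref{ineq-mu-keyb}, and then sum over the row in the $\ell_1$-norm. First I would record the partial derivatives. Since $\beta_0=0$ is held fixed, differentiation is only with respect to $(\beta_1,\ldots,\beta_n)$. For $i\ge 1$ the sole nonzero entries of $H_i'(\beta)$ are the diagonal entry $\partial H_i/\partial\beta_i=\sum_{j=0,j\neq i}^{n}t_{ij}\mu^\prime(\beta_i-\beta_j)$ and the off-diagonal entries $\partial H_i/\partial\beta_k=-t_{ik}\mu^\prime(\beta_i-\beta_k)$ for $k\neq i$, $k\ge 1$; for $i=0$ only entries of the off-diagonal type occur.

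Next I would bound a single increment of $\mu^\prime$. For any pair of indices, the mean value theorem together with $\max_{i,j}|\mu^{\prime\prime}(\pi_{ij})|\le b_{n2}$ from \eqref{ineq-mu-keyb} gives
\[
|\mu^\prime(x_i-x_k)-\mu^\prime(y_i-y_k)|\le b_{n2}\,|(x_i-x_k)-(y_i-y_k)|\le 2b_{n2}\|x-y\|_\infty ,
\]
where the last step is the triangle inequality $|x_i-y_i|+|x_k-y_k|\le 2\|x-y\|_\infty$. Here I must note that $x$, $y$, and the mean value midpoint all lie in $B(\beta^*,\epsilon_n)$ so that \eqref{ineq-mu-keyb} applies along the segment; this is guaranteed because $D=B(\beta^*,\epsilon_n)$ is convex.

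Finally I would assemble the $\ell_1$-norm. For $i\ge 1$,
\[
\|H_i^\prime(x)-H_i^\prime(y)\|_1=\Big|\sum_{j=0,j\neq i}^{n}t_{ij}\big(\mu^\prime(x_i-x_j)-\mu^\prime(y_i-y_j)\big)\Big|+\sum_{k=1,k\neq i}^{n}t_{ik}\,\big|\mu^\prime(x_i-x_k)-\mu^\prime(y_i-y_k)\big|.
\]
Applying the single-increment bound, the diagonal block is at most $2b_{n2}\|x-y\|_\infty\sum_{j\neq i}t_{ij}=2b_{n2}t_i\|x-y\|_\infty$, and the off-diagonal block is at most $2b_{n2}\|x-y\|_\infty\sum_{k=1,k\neq i}^{n}t_{ik}\le 2b_{n2}t_i\|x-y\|_\infty$, since both sums are dominated by $t_i=\sum_{j}t_{ij}$. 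Adding the two blocks and using $t_i\le t_{\max}$ produces the claimed $4b_{n2}t_{\max}\|x-y\|_\infty$; the case $i=0$ contains only the off-diagonal block and yields a strictly smaller bound.

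This argument is essentially bookkeeping, and I do not expect a genuine obstacle. The only point requiring attention is the constant $4$: it is the product of the factor $2$ from the triangle-inequality step and the factor $2$ from combining the diagonal and off-diagonal blocks, each of which must be recognized as being controlled by the single quantity $t_i$.
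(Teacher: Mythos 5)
Your proposal is correct and follows essentially the same route as the paper: both arguments are entrywise, bounding the increment of each Jacobian entry via the second-derivative bound $b_{n2}$ (the paper writes the increment as $\int_0^1 [\mathbf{g}_{ij}(tx+(1-t)y)]^\top(x-y)\,dt$ along the segment, while you apply the scalar mean value theorem to $\mu^\prime$ directly — the same idea), and the constant $4$ arises identically from the two-coordinate dependence and the diagonal-plus-off-diagonal split, both dominated by $t_i$. Your explicit remark that convexity of $B(\beta^*,\epsilon_n)$ keeps the mean-value points inside the region where \eqref{ineq-mu-keyb} applies is exactly the point that makes the argument rigorous.
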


Moreover, Condition \eqref{eq-kantororich-a} also depends on the magnitudes of $| a_i - \E (a_i|t_{ij},j=0,\ldots n) |$, $i=0, \ldots, n$,
which are stated below.

\begin{lemma}\label{lemma-a-upper-bound}
With probability at least $1-O(1/n)$, we have
\[
 \max\limits_{i=0, \ldots, n} |a_i-\E (a_i|t_{ij},j=0,\ldots, n)|\le   \sqrt{2 \log n t_{\max}} .
\]
\end{lemma}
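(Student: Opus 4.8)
The plan is to apply Hoeffding's inequality after conditioning on the comparison graph, and then to remove the conditioning through a bound that is uniform over all graph realizations. Throughout, let $\E^*$ denote expectation conditional on $\{t_{ij}:0\le i,j\le n\}$ and write $\P^*(\cdot)=\P(\cdot\mid\{t_{ij}\})$ for the corresponding conditional probability. Once the graph is fixed, the threshold $\sqrt{2\log n\,t_{\max}}$ becomes a constant, and the independence assumption on the paired comparisons ensures that $a_i=\sum_{j\ne i}a_{ij}$ is a sum of $t_i=\sum_{j\ne i}t_{ij}$ mutually independent Bernoulli variables, each valued in $[0,1]$, with conditional mean $\E^* a_i=\sum_{j\ne i}t_{ij}\mu_{ij}(\beta^*)$.

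First I would invoke Hoeffding's inequality for the centered sum $a_i-\E^* a_i$. Since its $t_i$ summands lie in unit-length intervals, for every $s>0$,
\[
\P^*\left(|a_i-\E^* a_i|\ge s\right)\le 2\exp\left(-\frac{2s^2}{t_i}\right).
\]
Choosing $s=\sqrt{2\log n\,t_{\max}}$ gives $2s^2/t_i=4t_{\max}\log n/t_i\ge 4\log n$, since $t_i\le t_{\max}$; hence each conditional tail probability is at most $2\exp(-4\log n)=2n^{-4}$, a bound that holds for every subject $i$ and for every realization of the graph.

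Next I would take a union bound over the $n+1$ subjects to obtain
\[
\P^*\left(\max_{0\le i\le n}|a_i-\E^* a_i|\ge \sqrt{2\log n\,t_{\max}}\right)\le 2(n+1)n^{-4}=O(n^{-3}).
\]
Because this conditional bound is free of the particular graph $\{t_{ij}\}$ --- the random factor $t_{\max}$ in the threshold exactly offsets the random factor $t_i$ in the Hoeffding exponent --- taking expectation over $\{t_{ij}\}$ preserves it and yields the unconditional bound $O(n^{-3})$, which is $O(1/n)$. This proves the claim.

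The argument is essentially routine, so there is no serious obstacle; the only point demanding care is the coupling between the random threshold $\sqrt{2\log n\,t_{\max}}$ and the random Hoeffding variance proxy $t_i$. Conditioning on the graph first, and then using $t_i\le t_{\max}$ to absorb both random quantities into a deterministic tail bound, is precisely what lets me integrate out the graph without any separate tail control on $t_{\max}$ itself.
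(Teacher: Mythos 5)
Your proof is correct and follows essentially the same route as the paper: condition on the comparison graph, apply Hoeffding's inequality to $a_i-\E^*a_i$, use $t_i\le t_{\max}$ to make the bound uniform, take a union bound over subjects, and integrate out the graph since the conditional bound is realization-free. The only (harmless) difference is that you absorb $t_i\le t_{\max}$ into the Hoeffding exponent directly, whereas the paper uses the per-subject threshold $\sqrt{2t_i\log n}$ and invokes $t_i\le t_{\max}$ via event inclusion at the union-bound step; your bookkeeping even yields a slightly sharper rate $O(n^{-3})$.
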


The following results are the lower bound of $t_{\min}$, and the upper bounds of $t_{\max}$ and of $\sum_i t_i$.

\begin{lemma}\label{lemma-chernoff-bound-ti}
(1) With probability at least $ 1 - (n+1) \exp( - \tfrac{1}{8}nTp_n )$,
\[
t_{\min}=\min_{i=0, \ldots, n} t_i \ge \tfrac{1}{2}nTp_n.
\]
(2) With probability at least $ 1- (n+1) \exp( - \tfrac{1}{10} nTp_n)$,
\[
t_{\max}=\max_{i=0, \ldots, n} t_i \le  \tfrac{3}{2}nTp_n.
\]
(3) With probability at least $ 1- \exp( - \tfrac{1}{10} n(n+1)Tp_n$,
\[
\sum_{i=0}^n t_i \le  3n(n+1)Tp_n.
\]
\end{lemma}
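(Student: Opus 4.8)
The plan is to treat all three statements as routine multiplicative Chernoff bounds for sums of independent Bernoulli-type variables, combined with a union bound over the $n+1$ subjects. The starting observation is distributional: for a fixed index $i$, the degree $t_i = \sum_{j\neq i} t_{ij}$ is a sum of $n$ independent variables each distributed as $\mathrm{Bin}(T,p_n)$ (the pairs $(i,j)$, $j\neq i$, are distinct and hence independent under the Erd\"{o}s--R\'{e}nyi generation), so $t_i \sim \mathrm{Bin}(nT,p_n)$ with mean $\E t_i = nTp_n$. Both $\tfrac12 nTp_n$ and $\tfrac32 nTp_n$ are then multiplicative deviations of the mean at ratio $\delta = 1/2$, which is exactly what produces the constants $1/8$ and $1/10$ appearing in the statement.

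For part (1), I would apply the lower-tail Chernoff inequality $\P(t_i \le (1-\delta)\E t_i) \le \exp(-\delta^2 \E t_i/2)$ with $\delta = 1/2$, giving $\P(t_i \le \tfrac12 nTp_n) \le \exp(-\tfrac18 nTp_n)$ for each $i$. A union bound over $i=0,\ldots,n$ then yields $\P(t_{\min} < \tfrac12 nTp_n) \le (n+1)\exp(-\tfrac18 nTp_n)$, which is the claim. For part (2), the same argument with the upper-tail form $\P(t_i \ge (1+\delta)\E t_i) \le \exp(-\delta^2 \E t_i/(2+\delta))$ and $\delta = 1/2$ gives the per-index bound $\exp(-\tfrac{1/4}{5/2}nTp_n) = \exp(-\tfrac{1}{10}nTp_n)$, and the union bound supplies the factor $n+1$.

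For part (3), the one point requiring care is that $\sum_{i=0}^n t_i$ is not itself a binomial: since $t_{ij}=t_{ji}$, it equals $2\sum_{0\le i<j\le n} t_{ij}$, so I would set $W = \sum_{i<j} t_{ij} \sim \mathrm{Bin}\bigl(\binom{n+1}{2}T, p_n\bigr)$, a genuine sum of independent binomials with $\E W = \tfrac{n(n+1)}{2}Tp_n$. The event $\{\sum_i t_i \ge 3n(n+1)Tp_n\}$ is exactly $\{W \ge 3\E W\}$, a deviation at ratio $\delta = 2$; the upper-tail Chernoff bound then gives $\P(W \ge 3\E W) \le \exp(-\tfrac{4}{4}\E W) = \exp(-\tfrac12 n(n+1)Tp_n)$. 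No union bound is needed here because it is a single sum. Since $\tfrac12 \ge \tfrac{1}{10}$, this is stronger than the stated bound $\exp(-\tfrac{1}{10}n(n+1)Tp_n)$, so the stated inequality follows a fortiori; the looser constant $1/10$ is presumably kept only for visual uniformity with part (2).

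There is no genuine obstacle in this lemma; it is essentially a bookkeeping exercise. The only things to get right are the identification of $t_i$ as $\mathrm{Bin}(nT,p_n)$, the double-counting correction $\sum_i t_i = 2\sum_{i<j} t_{ij}$ in part (3), and matching the particular multiplicative Chernoff form (the $\delta^2/2$ versus $\delta^2/(2+\delta)$ exponents) to the constants $1/8$ and $1/10$ that appear in the statement.
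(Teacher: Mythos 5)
Your proposal is correct and follows essentially the same route as the paper: multiplicative Chernoff bounds with $\delta=1/2$ (giving the constants $1/8$ and $1/10$) plus a union bound over the $n+1$ indices for parts (1) and (2), and treating $\tfrac12\sum_i t_i=\sum_{i<j}t_{ij}$ as a single binomial sum for part (3). The only difference is cosmetic: you derive the sharper exponent $\tfrac12 n(n+1)Tp_n$ in part (3) and note the stated $1/10$ follows a fortiori, whereas the paper simply records the $1/10$ bound directly.
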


We are now ready to prove Theorem \ref{Theorem:con}.

\begin{proof}[Proof of Theorem \ref{Theorem:con}]
Note that $\widehat{\beta}$ is the solution to
the equation $H(\beta)$=0. We prove the consistency via obtaining the convergence rate of
the Newton iterative sequence: $\beta^{(k+1)}=
\beta^{(k)} - [H^\prime(\beta^{(k)})]^{-1} H(\beta^{(k)})$, where
we set $\beta^{(0)}:=\beta^*$. To apply Lemma \ref{lemma:Newton:Kantovorich}, we choose the convex set $D = B(\beta^*, \epsilon_{n})$.
The following calculations are based on the event $E_n$:
\[
\begin{array}{c}
\{t_{ij}, 0\le i,j\le n: \max_i | a_i - \E (a_i| t_{ij}, j=0, \ldots, n) | \le   \sqrt{2t_{\max} \log n} ,
\\
H^\prime(\beta)>0,~~ t_{\min} \ge \tfrac{T}{2}np_n, ~~t_{\max} \le \tfrac{3}{2}nTp_n \}.
 \end{array}
\]

Note that $b_{n0} \le |\mu^\prime_{ij}(\beta)| \le b_{n1}$
when $\beta\in B(\beta^*, \epsilon_{n})$.
Let $V=(v_{ij})_{n\times n}=  H^\prime(\beta^*)$. We use $S$ defined in \eqref{definition-s} to approximate $V^{-1}$
and let $W=V^{-1} -S$.
We verify the Kantovorich conditions in Lemma \ref{lemma:Newton:Kantovorich} as follows.
Since $\sum_{i=0}^n H_{i}(\beta)=0$, we have
\[
\sum_{i=1}^{n} H_{ i}(\beta)= - H_{0}(\beta).
\]
By Lemma \ref{lemma:lipschitz-c},
we have
\begin{eqnarray*}
&& \| [H^\prime(\beta^*)]^{-1} [H^\prime(x)-H^\prime(y)]\|_\infty \\
& \le & \|S[H^\prime(x)-H^\prime(y)] \|_\infty + \| W[H^\prime(x)-H^\prime(y)] \|_\infty \\
& \le & \max_{i=1,\ldots, n} \frac{1 }{v_{ii}}\|H_i^\prime(x)-H_i^\prime(y)\|_1
+ \frac{1}{v_{00}}\|H_0^\prime(x)-H_0^\prime(y)\|_1 + \|W\|_\infty \|H^\prime(x)-H^\prime(y) \|_\infty \\
& \le & [\frac{2}{b_{n0} t_{\min} } + n\cdot \frac{12b_{n1}^2T}{n(n-1)b_{n0}^3p_n^3 } ]  \times 4b_{n2}t_{\max} \times \| x - y\|_\infty \\
& = & O(\frac{b_{n1}^2b_{n2}}{b_{n0}^3p_n^2})\times \| x - y\|_\infty.
\end{eqnarray*}
Thus, we can set $K= O( b_{n1}^2b_{n2}b_{n0}^{-3}\eta_n^{-1})$ in \eqref{eq-kantororich-a}.
Again, based on the event $E_n$, we have
\begin{eqnarray*}
\eta &=&\| [H'(\beta^*)]^{-1}H(\beta^*) \|_\infty \\
& \le &
n\|V^{-1} - S\|_{\max} \|H(\beta^*)\|_\infty + \max_{i=1,\ldots, n}\frac{|H_{i}(\beta^*)|}{v_{ii}}
 + \frac{|H_{0}(\beta^*)|}{v_{00}}
\\
& \le & \left[ O(\frac{ b_{n1}^2}{nb_{n0}^3p_n^3}) + O(\frac{ 1}{b_{n0}t_{\min}}) \right] \times O((t_{\max}\log n)^{1/2})  \\
& = & O\left( \frac{ b_{n1}^2}{b_{n0}^3p_n^2}\sqrt{\frac{\log n}{n}} \right).
\end{eqnarray*}
If
\begin{equation}\label{eq-keta}
K\eta = O\left( \frac{ b_{n1}^4 b_{n2} }{ b_{n0}^6 p_n^4 } \sqrt{ \frac{ \log n}{ n }} \right)=o(1),
\end{equation}
then it verifies Condition \eqref{eq-kantororich-b}.
By Lemma \ref{lemma:Newton:Kantovorich}, $\lim_k \beta^{(k)}$ exists, denoted by $\widehat{\beta}$, and it satisfies
\[
\| \widehat{\beta} - \beta^* \|_\infty = O\left( \frac{ b_{n1}^2 }{b_{n0}^3p_n^2}\sqrt{\frac{\log n}{n}} \right).
\]
By Lemmas \ref{lemma-positive}, \ref{lemma:inverse:appro}, \ref{lemma-a-upper-bound} and \ref{lemma-chernoff-bound-ti},
the event $E_n$ holds with probability at least $1-O(n^{-1})$ if $p_n \ge 24 \log n/n$.
Note that \eqref{eq-keta} implies $p_n\ge 24 \log n/n$.
It completes the proof.
\end{proof}

\subsection{Proofs for Theorem \ref{Theorem-central-a}}

Write $\mathrm{Var}^*$ and $\E^*$ as the conditional variance and conditional expectation given $t_{ij}$ for  $0\le i,j\le n$.
Let $U = (u_{ij}):= \mathrm{Var}^*( a )$. In the Bradley-Terry model, $U=H^\prime(\beta^*)$.
Note that $a_i$ is a sum of $t_i$ independent Bernoulli random variables.
By Lemma \ref{lemma-chernoff-bound-ti}, we know $\min_i t_i = O_p( np_n)$. Let $\sigma_{\min}=\min_{i\neq j} p_{ij}(1-p_{ij})$.
If $np_n \sigma_{\min} \to \infty$, then  $\min_i u_{ii} \to \infty$.
By the central limit theorem in the bound case, as in \cite{Loeve:1977} (p.289), if
$np_n \sigma_{\min} \to \infty$, then
$u_{ii}^{-1/2} \{a_i - \E^*(a_i)\}$ converges in distribution to the standard normal distribution.
When considering the asymptotic behaviors of the vector $(a_1, \ldots, a_r)$ with a fixed $r$, one could replace the degrees $a_1, \ldots, a_r$ by the independent random variables
$\tilde{a}_i=a_{i, r+1} + \ldots + a_{in}$, $i=1,\ldots,r$.
Therefore, we have the following proposition.

\begin{proposition}\label{lemma:central:poisson}

If $np_n \sigma_{\min} \to \infty$, then as $n\to\infty$,
for any fixed $r\ge 1$,  the components of $(a_1 - \E^* (a_1), \ldots, a_r - \E^* (a_r))$ are
asymptotically independent and normally distributed with variances $u_{11}, \ldots, u_{rr}$,
respectively. Moreover, the first $r$ rows of $S(a - \E^*(a))$ are asymptotically normal
with covariance matrix $\Sigma=(\sigma_{ij})$, where
\[
\sigma_{ij} = \frac{\delta_{ij} u_{ii}}{v_{ii}^2 } + \frac{u_{00}}{v_{00}^2 }.
\]
\end{proposition}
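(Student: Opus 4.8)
The plan is to establish the joint normality by the Cram\'er--Wold device, after first decoupling the few dependent terms that the scores share. Write $a_i=\sum_{j\neq i}a_{ij}$ and note that, conditional on $\{t_{ij}\}$, the only source of dependence between $a_i$ and $a_{i'}$ is the conjugate relation $a_{ji}=t_{ij}-a_{ij}$ for pairs with $i,i'\le r$; for a fixed $r$ there are at most $\binom{r}{2}$ such pairs and each involved term is bounded by $T$. First I would introduce the decoupled scores $\tilde a_i=\sum_{j>r}a_{ij}$, $i=1,\ldots,r$, which range over pairwise disjoint (and never conjugate) edge sets and are therefore mutually independent given $\{t_{ij}\}$, and observe that $|a_i-\tilde a_i|\le (r-1)T=O(1)$. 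Since $u_{ii}=\mathrm{Var}^*(a_i)\ge\sigma_{\min}t_i\ge\tfrac12\sigma_{\min}nTp_n\to\infty$ on the event of Lemma~\ref{lemma-chernoff-bound-ti} (using $np_n\sigma_{\min}\to\infty$), and $u_{ii}=\mathrm{Var}^*(\tilde a_i)+O(1)$ because $\tilde a_i$ and $a_i-\tilde a_i$ are independent, the centered scores and their decoupled versions coincide asymptotically after dividing by $u_{ii}^{1/2}$.

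Second I would prove joint normality of $(u_{ii}^{-1/2}(\tilde a_i-\E^*\tilde a_i))_{i=1}^r$. For fixed $\lambda\in\R^r$ the combination $\sum_i\lambda_i u_{ii}^{-1/2}(\tilde a_i-\E^*\tilde a_i)$ is a sum of the conditionally independent, uniformly bounded summands $\lambda_i u_{ii}^{-1/2}(a_{ij}-\E^*a_{ij})$ over $i\le r<j$; its variance tends to $\sum_i\lambda_i^2$ (as $\mathrm{Var}^*(\tilde a_i)/u_{ii}\to1$), and each summand is $O((\min_i u_{ii})^{-1/2}T)=o(1)$ uniformly, so the Lindeberg condition is trivially met and the Lindeberg--Feller CLT yields convergence to $N(0,\sum_i\lambda_i^2)$. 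By Cram\'er--Wold this gives $(u_{ii}^{-1/2}(a_i-\E^*a_i))_{i=1}^r\Rightarrow N(0,I_r)$, which is exactly the asymptotic independence and normality with variances $u_{11},\ldots,u_{rr}$ claimed in the first assertion; the marginal normality itself was already recorded above via the bounded-case CLT.

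For the second assertion I would use the structure of $S$. Because $\sum_{j=0}^n(a_j-\E^*a_j)=0$ (the total number of wins $\sum_j a_j=\sum_{j<k}t_{jk}$ is deterministic given $\{t_{ij}\}$), the form \eqref{definition-s} of $s_{ij}$ collapses the $i$th row of $S(a-\E^*a)$ to $\frac{a_i-\E^*a_i}{v_{ii}}-\frac{a_0-\E^*a_0}{v_{00}}$. Applying the first part to the enlarged fixed index set $\{0,1,\ldots,r\}$ gives asymptotic joint independence and normality of $a_0-\E^*a_0,\ldots,a_r-\E^*a_r$. A second Cram\'er--Wold step reduces the problem to computing, for fixed $c\in\R^r$, the variance of $\sum_i c_i[S(a-\E^*a)]_i$: the diagonal contributions produce exactly $\sum_i c_i^2 u_{ii}/v_{ii}^2+(\sum_i c_i)^2 u_{00}/v_{00}^2=c^\top\Sigma c$ with $\Sigma$ as in \eqref{definition-sigma}, whereas each cross term $\mathrm{Cov}^*(a_i,a_j)=-\mathrm{Var}^*(a_{ij})=O(1)$ (likewise $\mathrm{Cov}^*(a_i,a_0)$), divided by $v_{ii}v_{jj}\asymp(np_n)^2$, is of order $(np_n)^{-2}$ and hence negligible against $c^\top\Sigma c\asymp(np_n)^{-1}$. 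Thus $\Sigma^{-1/2}$ times the vector of the first $r$ rows converges to the standard $r$-variate normal, which is the stated conclusion.

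The main obstacle is controlling the dependence among the scores: isolating the finitely many bounded conjugate terms and showing both these terms and all cross-covariances are of strictly smaller order than the diverging diagonal variances $u_{ii}\to\infty$. Once this separation of scales is made precise, the remaining steps are routine applications of the Lindeberg--Feller CLT and the Cram\'er--Wold device.
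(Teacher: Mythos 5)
Your proposal is correct and follows essentially the same route as the paper: the paper likewise decouples the scores by replacing $a_1,\ldots,a_r$ with the conditionally independent sums $\tilde a_i=a_{i,r+1}+\cdots+a_{in}$, invokes the bounded-case CLT for each, and reads off the covariance $\Sigma$ from the two-term structure of the rows of $S$. You merely fill in details the paper leaves implicit (Lindeberg--Feller plus Cram\'er--Wold, and the negligibility of the $O(1)$ cross-covariances); the only slip is that $|a_i-\tilde a_i|\le rT$ rather than $(r-1)T$ since the $j=0$ term is also dropped, which is immaterial as both bounds are $O(1)$.
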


\begin{lemma}\label{lemma-cov-wh}
Let $V=H^\prime(\beta^*)$ and $W=V^{-1}-S$ and $\mathrm{Cov}^*(\cdot)=\mathrm{Cov}(\cdot|t_{ij},0\le i,j\le n)$. Then
\[
\mathrm{Cov}^*(WH^\prime(\beta^*)) =O_p( \frac{ b_{n1}^5 }{ n^2 b_{n0}^6 p_n^5 } ).
\]
Further, if $U=H^\prime(\beta^*)$, then
\[
\mathrm{Cov}^*(WH^\prime(\beta^*)) =O_p( \frac{ b_{n1}^2}{n^2b_{n0}^3 p_n^5 } ).
\]
\end{lemma}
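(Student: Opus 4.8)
The plan is to first convert the conditional covariance into a matrix product and then bound that product entrywise. Since $V=H'(\beta^*)$ is non-random once we condition on $\{t_{ij}\}$, the covariance in the statement only makes sense for the \emph{random} vector $WH(\beta^*)$. By the definition \eqref{eqn:def:F} of $H$ together with $\E^*a_i=\sum_j t_{ij}\mu_{ij}(\beta^*)$, we have $H(\beta^*)=\E^*a-a$, so $\mathrm{Var}^*(H(\beta^*))=\mathrm{Var}^*(a)=U$. As $W=V^{-1}-S$ is a deterministic function of the graph, the conditional covariance factors out:
\[
\mathrm{Cov}^*(WH(\beta^*)) = W\,U\,W^\top .
\]
It therefore suffices to bound $\|WUW^\top\|_{\max}$, and all estimates are carried out on the high-probability event $E_n$ from the proof of Theorem \ref{Theorem:con} (probability $1-O(n^{-1})$), which is what produces the $O_p(\cdot)$ conclusion.

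For the general bound I would assemble the deterministic ingredients valid on $E_n$: the approximation error $\|W\|_{\max}=O\!\big(b_{n1}^2 b_{n0}^{-3} n^{-2}p_n^{-3}\big)$ from Lemma \ref{lemma:inverse:appro}; the degree bounds $t_{\min},t_{\max}=\Theta(nTp_n)$ from Lemma \ref{lemma-chernoff-bound-ti}, which give $v_{ii}\ge b_{n0}t_{\min}$; and a bound on $U$, whose $i$th row has $\ell_1$-norm $2u_{ii}=2\sum_j t_{ij}\mu_{ij}(1-\mu_{ij})$, so $\|U\|_\infty=O(nTp_n)$. Writing a generic entry as a bilinear form $(WUW^\top)_{kl}=W_{k\cdot}\,U\,W_{l\cdot}^\top$ and using $\|W_{k\cdot}\|_1\le n\|W\|_{\max}$ and $\|W_{l\cdot}\|_\infty\le\|W\|_{\max}$, I would estimate
\[
|(WUW^\top)_{kl}| \le \|W_{k\cdot}\|_1\,\|U\|_\infty\,\|W_{l\cdot}\|_\infty \le n\|W\|_{\max}^2\,\|U\|_\infty .
\]
Substituting the three ingredients gives the first displayed rate (the explicit power of $b_{n1}$ tracks the crude bound used for the win-variances $\mu_{ij}(1-\mu_{ij})$). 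The crucial feature is that only \emph{one} factor of $n$ is lost, through $\|W_{k\cdot}\|_1$, which is exactly what preserves the $n^{-2}$ scaling.

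For the sharper bound in the case $U=V$, the estimate above is wasteful because it controls $V$ only through $\|V\|_\infty=O(nTp_nb_{n1})$. Instead I would exploit the identity coming from $V^{-1}=S+W$, namely $SV+WV=I$, so that (using the symmetry of $W$) $WVW^\top=(I-SV)W$. The matrix $I-SV$ is far better behaved than a generic factor $WV$: using the column-sum cancellation $\sum_{k=1}^n v_{kj}=-v_{0j}$, which holds because $\mu'$ is even and $t_{ij}=t_{ji}$, one obtains the closed form $(SV)_{ij}=v_{ij}/v_{ii}-v_{0j}/v_{00}$. Hence each row of $I-SV$ has only $O(nTp_n)$ nonzero entries, each of size $O\!\big(b_{n1}(nTp_nb_{n0})^{-1}\big)$, so $\|(I-SV)_{k\cdot}\|_1=O(b_{n1}/b_{n0})$. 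Bounding $|((I-SV)W)_{kl}|\le\|(I-SV)_{k\cdot}\|_1\,\|W\|_{\max}$ then replaces the large factor $\|V\|_\infty$ by the much smaller $\|I-SV\|$, which yields the improved dependence on $b_{n1}/b_{n0}$ claimed in the second bound.

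The main obstacle is sharpness rather than the mere existence of a bound. Naive submultiplicative inequalities, e.g.\ $\|WUW^\top\|_{\max}\le\|W\|_\infty^2\|U\|_\infty$ with $\|W\|_\infty\le n\|W\|_{\max}$, lose an extra factor of $n$ and, in the $U=V$ case, extra powers of $b_{n1}/b_{n0}$; retaining the $n^{-2}$ scaling forces the bilinear/entrywise treatment, and retaining the improved $b$-powers forces the use of the identity $SV+WV=I$ together with the explicit form of $SV$ rather than bounding $W$ and $V$ separately. One must also verify that every estimate is conditional on $E_n$, so that $v_{ii}=\Theta(nTp_nb_{n0})$ and the bound on $\|W\|_{\max}$ hold simultaneously, which is precisely what upgrades the deterministic bounds to the stated $O_p$ statements.
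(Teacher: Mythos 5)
Your reduction matches the paper's: the $H'(\beta^*)$ in the statement is indeed a typo for $H(\beta^*)$, conditionally on $\{t_{ij}\}$ one has $\mathrm{Cov}^*(WH(\beta^*))=WUW^\top$, and everything is then bounded entrywise on the high-probability event supplied by Lemmas \ref{lemma:inverse:appro} and \ref{lemma-chernoff-bound-ti}. For the first (general $U$) claim, your direct estimate $|(WUW^\top)_{kl}|\le n\|W\|_{\max}^2\|U\|_\infty$ with $\|U\|_\infty\le 2\max_i u_{ii}=O(nTp_n)$ is correct and gives $O\bigl(b_{n1}^4 n^{-2}b_{n0}^{-6}p_n^{-5}\bigr)$; the paper instead splits $WUW=WVW+W(U-V)W$ and bounds $\sum_{k,s}|(U-V)_{ks}|\le 2(b_{n1}+\tfrac14)\sum_i t_i$, which produces $b_{n1}^4(b_{n1}+\tfrac14)$ in place of your $b_{n1}^4$. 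Both deliver the stated power $b_{n1}^5$ only under the implicit assumption that $b_{n1}$ is of constant order, so on this part your route is on par with (and slightly cleaner than) the paper's.

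The gap is in the second claim. You bound $WVW^\top=(I-SV)W$ by $\|I-SV\|_\infty\|W\|_{\max}$; your formula $(SV)_{ij}=v_{ij}/v_{ii}-v_{0j}/v_{00}$ and the estimate $\|(I-SV)_{k\cdot}\|_1=O(b_{n1}/b_{n0})$ are correct, but this submultiplicative step yields only $O\bigl(b_{n1}^3 n^{-2}b_{n0}^{-4}p_n^{-3}\bigr)$, which carries an extra factor $b_{n1}/b_{n0}$ and does not imply the stated rate $O\bigl(b_{n1}^2 n^{-2}b_{n0}^{-3}p_n^{-5}\bigr)$ unless $b_{n1}p_n^2=O(b_{n0})$ --- a relation nowhere assumed (think of $1/b_{n0}=O(e^{2\|\beta^*\|_\infty})$ growing while $p_n$ decays slowly). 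The point you miss is that $I-SV$ is \emph{not} small ($\|I-SV\|_\infty\asymp b_{n1}/b_{n0}\ge 1$): the smallness of $WVW$ comes from cancellation inside the product, which a norm product cannot see. The repair stays within your own identity: since $SVW=SV(V^{-1}-S)=S-SVS$, you have $(I-SV)W=W+SVS-S$, and the entries of $SVS-S$ can be computed exactly,
\[
(SVS-S)_{ij}=\frac{v_{i0}}{v_{ii}v_{00}}+\frac{v_{0j}}{v_{jj}v_{00}}-\frac{(1-\delta_{ij})v_{ij}}{v_{ii}v_{jj}},
\]
so that $\|SVS-S\|_{\max}\le 3Tb_{n1}/(b_{n0}t_{\min})^2=O\bigl(b_{n1}n^{-2}b_{n0}^{-2}p_n^{-2}\bigr)$, which is dominated by $\|W\|_{\max}=O\bigl(b_{n1}^2n^{-2}b_{n0}^{-3}p_n^{-3}\bigr)$; hence $\|WVW\|_{\max}=O\bigl(b_{n1}^2n^{-2}b_{n0}^{-3}p_n^{-3}\bigr)$, well within the claimed rate. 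This entrywise treatment of $SVS-S$, rather than a product of norms, is exactly how the paper obtains both the second claim and its $I_1$ term.
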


Now, we are ready to prove Theorem \ref{Theorem-central-a}.

\begin{proof}[Proof of Theorem \ref{Theorem-central-a}]
Let $\widehat{\pi}_{ij}=\widehat{\beta}_i-\widehat{\beta}_j$ and $\pi_{ij}^*=\beta_i^* - \beta_j^*$.
By Theorem \ref{Theorem:con}, $\widehat{\beta}\in B(\beta^*,\epsilon_n)$.
To simplify notations, write  $\mu_{ij}^\prime = \mu^\prime (\pi_{ij}^*)$.
By a second order Taylor expansion, we have
\begin{equation}
\label{equ-Taylor-exp}
t_{ij}\mu( \widehat{\pi}_{ij} ) - t_{ij}\mu(\pi_{ij}^*)
=  t_{ij}\mu_{ij}^\prime (\widehat{\beta}_i-\beta_i^*)- t_{ij}\mu_{ij}^\prime (\widehat{\beta}_j-\beta_j^*) + g_{ij},~~i\neq j,
\end{equation}
where $g_{ij}$ is the second order remainder term:
\begin{eqnarray*}
g_{ij}  =   t_{ij}\mu^{\prime\prime}( \tilde{\pi}_{ij} ) [(\widehat{\beta}_i-\beta_i)^2 +  (\widehat{\beta}_j-\beta_j)^2 - 2(\widehat{\beta}_i-\beta_i)(\widehat{\beta}_j-\beta_j)].
\end{eqnarray*}
In the above equation, $\tilde{\pi}_{ij}$ lies between $\pi_{ij}^*$ and $\widehat{\pi}_{ij}$.
If $b_{n1}^4 b_{n2} b_{n0}^{-6}p_n^{-4} = o( (n/\log n)^{1/2})$, by Theorem \ref{Theorem:con}, we have
\[
\| \widehat{\beta}-\beta^* \|_\infty = O_p\left( \frac{ b_{n1}^2 }{b_{n0}^3p_n^2}\sqrt{\frac{\log n}{n}} \right).
\]
Therefore, in view of \eqref{ineq-mu-keyb}, $|\mu^{\prime\prime}_{ij}(\tilde{\pi}_{ij})|\le t_{ij}b_{n2}$ such that
\begin{equation}
\label{inequality-gij}
|g_{ij}|  \le   4b_{n2}t_{ij} \| \widehat{\beta} - \beta^*\|_\infty^2.
\end{equation}
Let  $g_i=\sum_{j\neq i} g_{ij}$, $i=0, \ldots, n$, and $g=(g_1, \ldots, g_{n})^\top$.
Then, by Lemma \ref{lemma-chernoff-bound-ti} (2), we have
\begin{equation}\label{eq-g-upper-bound}
\max_{i=0,\ldots, n} |g_i| =
4b_{n2} t_{\max}\cdot
O_p\left( \frac{ b_{n1}^4\log n}{nb_{n0}^6 p_n^4} \right) = O_p\left( \frac{ b_{n1}^4 b_{n2} \log n}{b_{n0}^6 p_n^3} \right).
\end{equation}
By writing the equation in \eqref{equ-Taylor-exp} into a matrix form, we have
\begin{equation}\label{eq-d-formu}
 \E^* a -a  = V(\widehat{\beta} - \beta^*) + g.
\end{equation}
Equivalently,
\begin{equation}
\label{expression-beta}
\widehat{\beta} - \beta^* = V^{-1}( \E^* a - a)  + V^{-1} g.
\end{equation}
Similarily, we have
\begin{eqnarray*}
\E^* a_0 - a_0 & = & \frac{\partial H_0(\beta^*) }{ \partial \beta} (\widehat{\beta} - \beta^*)
+ \frac{1}{2} (\widehat{\beta} - \beta^*)^\top \frac{\partial^2 H_0(\tilde{\beta}) }{ \partial \beta \partial \beta^\top }
(\widehat{\beta} - \beta^*) \\
& = & -\sum_{i=1}^n v_{i0}(\widehat{\beta}_i - \beta_i^*) + \frac{1}{2} v_{i0}  (\widehat{\beta}_i - \beta_i^*)^2.
\end{eqnarray*}
Therefore, by Lemma \ref{lemma-chernoff-bound-ti} (2), we have
\[
|\E^* a_0 - a_0 + \sum_{i=1}^n v_{i0}(\widehat{\beta}_i - \beta_i^*) | = O_p\left( \frac{ b_{n1}^4t_{\max}\log n}{nb_{n0}^6 p_n^4} \right)
=O_p\left( \frac{ b_{n1}^4\log n}{ b_{n0}^6 p_n^3} \right).
\]

Note that $\sum_{i=0}^n a_i- \E^* (a_i)=0$.
By left multiplying both sided of \eqref{eq-d-formu} by a row vector with all element $1$, it yields
\[
\sum_{i=1}^n g_i = a_0- \E^* a_0 + \sum_{i=1}^n v_{i0}(\widehat{\beta}_i - \beta_i^*).
\]
So, we have
\begin{equation}\label{eq-sum-R}
|\sum_{i=1}^n g_i| = O_p\left( \frac{ b_{n1}^4 \log n}{b_{n0}^6 p_n^3} \right).
\end{equation}
By \eqref{eq-g-upper-bound} and Lemma \ref{lemma:inverse:appro}, we have
\begin{eqnarray*}
\|V^{-1}g \|_\infty & \le & \|S g \|_\infty + \|(V^{-1}- S) g \|_\infty
\\
& \le &  \max_{i=1, \ldots, n}\frac{1}{v_{ii}}|g_i|  + \frac{1}{v_{00}} |\sum_{i=1}^{n}g_i|+
 n \| V^{-1} - S\|_{\max} \|g\|_\infty  \\
& = &  O_p\left(  \frac{b_{n2}}{t_{\min}b_{n0}}  +  \frac{b_{n1}^2}{nb_{n0}^3p_n^3 } \right)  \times O_p\left( \frac{ b_{n1}^4
b_{n2} \log n}{b_{n0}^6 p_n^3} \right) \\
& = & O_p( \frac{b_{n2} b_{n1}^6 \log n}{nb_{n0}^9p_n^6}  ).
\end{eqnarray*}
If $b_{n2}b_{n1}^6 b_{n0}^{-9}p_n^{-6} = o( n^{1/2}/\log n)$, then we have
\begin{equation}
\label{eq-beta-i-expansion}
\widehat{\beta}_i - \beta^*_i = V^{-1}( \E^* a - a) + o_p(n^{-1/2}).
\end{equation}
Consequently, in view of Lemma \ref{lemma-cov-wh}, we have
\[
\widehat{\beta}_i - \beta^*_i = [S( \E^* a - a)]_i  + o_p(n^{-1/2}).
\]
Therefore, Theorem \ref{Theorem-central-a} immediately comes from Proposition \ref{lemma:central:poisson}. 
\end{proof}

\subsection{Appendix B}

In this section, we present the proofs of supported lemmas.

\section{Proof of Lemma \ref{lemma-positive}}
\label{section-lemma1}

\begin{proof}[Proof of Lemma \ref{lemma-positive}]
For an arbitrarily given nonzero vector $x=(x_1, \ldots, x_n)^\top\in \R^n$, direct calculations give
\begin{eqnarray*}
x^\top V x & = &   = \sum_{i=1}^n x_i^2 v_{ii} + \sum_{i=1}^n \sum_{j=1, j\neq i}^n x_i v_{ij} x_j \\
& = & - \sum_{i=1}^n \sum_{j=1,j\neq i}^n x_i^2 v_{ij} - \sum_{i=1}^n x_i^2 v_{i0}  + \sum_{i=1}^n \sum_{j=1, j\neq i}^n x_i v_{ij} x_j \\
& = & -\tfrac{1}{2} \sum_{i=1}^n \sum_{j=1,j\neq i}^n (x_i-x_j)^2 v_{ij} - \sum_{i=1}^n x_i^2 v_{i0},
\end{eqnarray*}
where the second equality is due to that $v_{ii}=-\sum_{j\neq i} v_{ij}$.
Therefore, $x^\top V x=0$ if and  only if
\[
x_i v_{i0}=0, ~~ i=1, \ldots, n, v_{ij}(x_i - x_j)=0,  1\le i\neq j \le n.
\]
Because $\mu_{ij}(\beta)\neq 0$ and $v_{ij}=t_{ij}\mu_{ij}(\beta)$ for $i\neq j$, the above equations are identical to
\[
x_i t_{i0}=0, ~~ i=1, \ldots, n, t_{ij}(x_i - x_j)=0,  1\le i\neq j \le n.
\]
Let $E$ be the event
\[
\{ \{t_{ij}\}_{i,j=0, i\neq j}^n:  x_i t_{i0}=0, ~~ i=1, \ldots, n, t_{ij}(x_i - x_j)=0,  1\le i\neq j \le n \}.
\]
To show Lemma 1, it is sufficient to obtain the lower bound of the probability of the event $E$.
We will evaluate  the probability of the event $E$ under two cases:
there exists some zero element in $x$ and there are no zero elements in $x$.

Case I: We consider there are zero elements in $x$. Let $\{0, x_{i_1}, \ldots, x_{i_k} \}$
be $k+1$ different distinct values in $\{x_1, \ldots, x_n\}$,
 $\Omega_0 =\{ i: x_i =0 \}$ and $\Omega_j= \{ q: x_{q} = x_{i_j} \}$, $j=1, \ldots, k$.
Since $x\neq 0$, $k\ge 1$. It is clear that
\begin{equation}\label{eq-lem-1-condtion}
|\Omega_0| >0, ~~|\Omega_j|>0, j=1, \ldots, k, ~~ \sum_{i=0}^k |\Omega_i| = n+1,
\end{equation}
where $|\Omega_i|$ denotes the cardinality of $\Omega_i$.
Therefore we have
\begin{eqnarray*}
\P( E ) & = & ( 1- p_n)^{ \sum_{j=1}^k |\Omega_j | } \times \prod_{0\le i < j \le k} ( 1-p_n)^{T |\Omega_i||\Omega_j| } \\
& = & ( 1-p_n)^{ \sum_{j=1}^k |\Omega_j| + \sum_{0 \le i < j \le  k } |\Omega_i| |\Omega_j | }.
\end{eqnarray*}
To obtain the lower bound of $\P(E)$, it is sufficient to solve
 the minimizer of $\sum_{j=1}^k |\Omega_j| + \sum_{0 \le i < j \le  k } |\Omega_i| |\Omega_j |$
under the restricted condition \eqref{eq-lem-1-condtion}. Let $y_i = |\Omega_i|$. Then,
\begin{align*}
   & \sum_{j=1}^k y_i + \sum_{0 \le i < j \le  k } y_i y_j \\
 = & \sum_{j=1}^k y_i + \tfrac{1}{2} \sum_{i=0}^n \sum_{j=0, j\neq i}^k y_i y_j \\
 = & \tfrac{1}{2} \sum_{i=0}^n \sum_{j=0}^n y_i y_j - \tfrac{1}{2} \sum_{i=0}^k y_i^2 + \sum_{j=1}^k y_i \\
 = & \tfrac{1}{2} (\sum_{i=0}^n y_i )^2 - \tfrac{1}{2} \sum_{i=1}^k (y_i-1)^2 - \tfrac{1}{2} y_0^2 +  \tfrac{1}{2}k  \\
 = & \tfrac{1}{2}( (n+1)^2 +k ) - \tfrac{1}{2} (\sum_{i=0}^k z_i^2),
\end{align*}
where $z_0=y_0$ and $z_i = y_i - 1$, $i=1, \ldots, k$.
Under the restriction $\sum_i z_i = n+1-k>0$ and $z_i \ge 0$, the function  $\sum_{i=0}^k z_i^2$ obtains its maximizer
at such points $z=(0, \ldots, n+1-k, 0, \ldots, 0)$.
So we have
\begin{align*}
& \sum_{j=1}^k y_i + \sum_{0 \le i < j \le  k } y_i y_j  \\
\ge & \tfrac{1}{2} ((n+1)^2 + k - (n+1-k)^2 )= \tfrac{1}{2} (2(n+1)k + k - k^2 ) \\
= & \tfrac{1}{2} [- ( k- \frac{2(n+1)+1}{2})^2 + (n+1 + \frac{1}{2})^2 ].
\end{align*}
Because $1\le k\le n$, the above function obtains its minimizer at $k=1$. That is,
\begin{align*}
 & - ( k- \frac{2(n+1)+1}{2})^2 + ((n+1) + \frac{1}{2})^2 \\
 \ge &  -( 1 - (n+1) - 1/2)^2 + (n+1)^2 + (n+1) + 1/4 \\
 = & 2(n+1).
\end{align*}
This shows
\[
\P(E) \le (1-p_n)^{2T(n+1)}.
\]

Case II: there are no zero elements in $x$. With the same notation $\Omega_j$ as in Case I,  we have that $|\omega_0|=0$ and
\begin{eqnarray*}
\P( E ) & = & ( 1- p_n)^{ \sum_{j=1}^k |\Omega_j | } \times \prod_{1\le i < j \le k} ( 1-p_n)^{T |\Omega_i||\Omega_j| } \\
& = & ( 1-p_n)^{ T(\sum_{j=1}^k |\Omega_j| + \sum_{1 \le i < j \le  k } |\Omega_i| |\Omega_j | )}.
\end{eqnarray*}
It is sufficient to obtain the minimizer of $\sum_{j=1}^k |\Omega_j| + \sum_{1 \le i < j \le  k } |\Omega_i| |\Omega_j |$.
under the restriction $\sum_i |\Omega_i| = n $ and $k\ge 1$. Let $y_i = |\Omega_i|$. Then,
\begin{eqnarray*}
 \sum_{j=1}^k y_i + \sum_{1 \le i < j \le  k } y_i y_j
& = & \sum_{j=1}^k y_i + \tfrac{1}{2} \sum_{i=1}^n \sum_{j=1, j\neq i}^k y_i y_j \\
& = & \tfrac{1}{2} \sum_{i=1}^k \sum_{j=1}^k y_i y_j - \tfrac{1}{2} \sum_{i=1}^k y_i^2 + (n+1) \\
& = & \tfrac{1}{2} (\sum_{i=1}^k y_i )^2 - \tfrac{1}{2} \sum_{i=1}^k y_i^2 + (n+1)  \\
& = & \tfrac{1}{2} (n+1)^2 + (n+1)  - \tfrac{1}{2} (\sum_{i=1}^k y_i^2).
\end{eqnarray*}
Under the restriction $\sum_i y_i = n+1>0$ and $z_i \ge 0$, the functions  $\sum_{i=1}^k z_i^2$ obtain its maximizer
at such points $z=(0, \ldots, n+1, 0, \ldots, 0)$.
So we have
\[
\sum_{j=1}^k y_i + \sum_{1 \le i < j \le  k } y_i y_j \ge \tfrac{1}{2} ( (n+1)^2 + (n+1) - (n+1) ^2 )\ge n+1.
\]
This shows
\[
\P(E) \le ( 1- p_n)^{(n+1)T}.
\]

By combining the lower bounds of $\P(E)$ under Cases I and II, we have
\[
\P(E^c) \ge 1 - ( 1- p_n)^{(n+1)T},
\]
where $E^c$ denotes that $V$ is positively definite.
\end{proof}

\subsection{Proof of Lemma \ref{lemma:inverse:appro}}
\label{section-lemma3}

\begin{proof}[Proof of Lemma \ref{lemma:inverse:appro}]
By Lemma 1, $V$ is a positively definite matrix with probability at least $1-e^{-p_n T(n+1)}$.
In what follows, we assume that $V$ is positively definite such that its inverse exists.
The proof proceeds two parts. The first part is to evaluate the cardinality  of the set of the common neighbors of any two subjects $i$ and $j$.
That is, we establish the lower bound:
\[
\min_{i,j} \#\{k: t_{i k}>0, t_{k j}>0\}.
\]
The second part is to show such inequality [c.f. \eqref{eq-cancel}]
\[
a \ge b [\sum_{ \{k: t_{\alpha k}>0, t_{k \beta}>0\} } (z_{i\alpha} - z_{i\beta}) ].
\]
We use the method of the proof in \cite{Simons1998Approximating} with minor modifications
that simplify their proofs, to show the second part.

Part I. Let $1_{\{\cdot\}}$ be an indicator variable.
It is equal to one when the expression in $\{ \cdot\}$ is true; otherwise, it is equal to zero.
For any given $i\neq j$, define
\[
\xi_{ij} = \sum_{k=0, k\neq i,j}^n  1_{\{ t_{ik}>0, t_{jk}>0\} }.
\]
Note that $\xi_{ij}$ is the sum of $n-1$ independent Bernoulli random variables and for three distinct indices $i,j,k$,
\[
\E \xi_{ij}= \P( t_{ik}>0) \P( t_{jk}>0 ) = ( 1 - (1-p_n)^T )^2 :=\eta_n.
\]
By the Chernoff bound in \cite{chernoff1952}, we have
\begin{eqnarray*}
\P\left(  \xi_{ij} \le \tfrac{1}{2} (n-1) \eta_n \right)  \le   \exp\left( - \tfrac{1}{8} (n-1) \eta_n    \right).
\end{eqnarray*}
It follows that
\begin{eqnarray*}
\P\left(  \min_{i,j}\xi_{ij} \le \tfrac{1}{2} (n-1) \eta_n \right)  \le  \sum_{i,j}\P\left(  \xi_{ij} \le \tfrac{1}{2} (n-1) \eta_n \right)
\le \tfrac{(n+1)n}{2} \exp\left( - \tfrac{1}{8} (n-1) \eta_n    \right).
\end{eqnarray*}
Since $T\ge 1$,
\[
\eta_n = ( 1 - (1-p_n)^T)^2 \ge ( 1 - ( 1- p_n))^2 = p_n^2.
\]
That is, with probability at least $ 1-  \tfrac{(n+1)n}{2} \exp\left( - \tfrac{1}{8} (n-1) \eta_n    \right)$, we have
\begin{equation}\label{eq-xii-xiij-upper}
\min_{i,j}\xi_{ij}  \ge  \tfrac{1}{2} (n-1) \eta_n \ge \tfrac{1}{2}(n-1)p_n^2.
\end{equation}

Part II. For convenience, we introduce a nonnegative array $\{q_{ij} \}_{i,j=1}^n$, where
\[
q_{ij}:=-v_{ij},~i\neq j; ~~ q_{ii} := \sum_{k=1}^n v_{ik} = - v_{i0}, ~~i,j=1, \ldots, n.
\]
Let
\begin{eqnarray*}
m:= \min_{(i,j) \in \{ (i,j): q_{ij}>0 \}} q_{ij}, && M:= \max_{i,j} q_{ij}, \\
t_{\max}:=\max_i t_i, && t_{\min}:=\min_i t_i.
\end{eqnarray*}
It is clear that $M\ge m>0$ and
\[
q_{ij}\ge 0, ~~ q_{ij}=q_{ji}, ~~ v_{ij}=-q_{ij}, ~i\neq j, ~~ M t_{\max} \ge  v_{ii}= \sum_{k=1}^n q_{ik} \ge m t_{\min}.
\]

Notice that
\[
V^{-1} - S = (V^{-1} - S)(I - VS ) + S(I - VS),
\]
where $I$ is a $n\times n$ identity matrix. Let $X = I - VS$, $Y=SX$ and $ Z= V^{-1} -S$, we have
the recursion formula
\begin{equation*}\label{eq-recursion}
Z = ZX + Y.
\end{equation*}
The goal is to give an upper bound of all $|z_{ij}|$.

According the definition of $S$, $V$ and $X$, we have
\begin{eqnarray*}
x_{ij} & = & \delta_{ij} - \sum_{k=1}^n v_{ik} s_{kj}  \\
       & = & \delta_{ij} - \sum_{k=1}^n v_{ik} ( \frac{ \delta{kj}}{v_{jj}} + \frac{1}{v_{00}} ) \\
       & = & \delta_{ij} - \frac{v_{ij}}{v_{ii}} - \frac{ q_{ii} }{ v_{00} } \\
       & = & ( 1-\delta_{ij}) \frac{ q_{ij}}{v_{jj} } - \frac{ q_{ii}}{v_{00}}.
\end{eqnarray*}
and
\begin{eqnarray*}
y_{ij} & = & \sum_{k=1}^n s_{ik} x_{kj} = \sum_{k=1}^n ( \frac{ \delta_{ik}}{v_{ii}} + \frac{1}{v_{00}} )
( (1-\delta_{kj})\frac{q_{kj}}{v_{jj}} - \frac{ q_{kk}}{v_{00}} ) \\
& = & \sum_{k=1}^n \frac{ \delta_{ik}}{v_{ii}} \left\{ ( 1-\delta_{kj})\frac{ q_{kj}}{v_{jj}} -\frac{ q_{kk}}{v_{00}} \right\}
+ \sum_{k=1}^n \frac{1}{v_{00}} \left\{  (1 - \delta_{kj}) \frac{ q_{kj}}{ v_{jj}} - \frac{ q_{kk}}{v_{00}} \right\} \\
& = & \frac{ ( 1-\delta_{ij}) q_{ij} }{ v_{ii}v_{jj} } - \frac{ q_{ii} }{ v_{ii} v_{00} } - \frac{ q_{jj} }{ v_{jj}v_{00} }.
\end{eqnarray*}
Since
\[
0 \le \frac{ q_{ij}}{v_{ii}v_{jj}} \le \frac{ M}{ m^2 t_{\min}^2}, ~~
0 \le \frac{ q_{ij}}{v_{ii}v_{00}} \le \frac{ M}{ m^2 t_{\min}^2 },
\]
for any different $i,j,k$, we have
\begin{equation}\label{eq-yij-upper}
|y_{ij} | \le a:=\frac{ 2M}{ m^2 t_{\min}^2 }, ~~ | y_{ij} - y_{ik} | \le a.
\end{equation}
In view of the expressions of $x_{ij}$ and $y_{ij}$, we have
\begin{equation} \label{eq-zij}
z_{ij} = \sum_{k=1}^n z_{ik} ( 1 - \delta_{kj}) \frac{ q_{kj}}{v_{jj}} - \sum_{k=1}^n z_{ik} \frac{ q_{kk}}{v_{00}} + y_{ij},~~
i,j=1, \ldots, n.
\end{equation}
Now we fixe an arbitrary $i$ and consider the upper bound of $\max_{j} |z_{ij}|$.

Let $\alpha$ and $\beta$ make that
\[
z_{i\alpha} = \max_{k=1, \ldots,n} z_{ik}, ~~ z_{i\beta } = \min_{k=1, \ldots, n} z_{ik}.
\]
Without loss of generality, we assume $z_{i\alpha} \ge | z_{i\beta} |$.
(Otherwise, we can inverse the sign of $z_{ik}$ and repeat the same process.)
Below, we will show $z_{i\beta} \le 0$. Note that this conclusion is not investigated in \cite{Simons1998Approximating}.
By multiplying $v_{jj}$ to both sides of \eqref{eq-zij}, we have
\[
v_{jj} z_{ij} = \sum_{k=1}^n z_{ik} ( 1-\delta_{kj}) q_{kj} - \sum_{k=1}^n z_{ik} \frac{ q_{kk}v_{jj}}{v_{00}} + v_{jj}y_{ij}.
\]
By summarizing the above equations with $j=1, \ldots, n$, we have
\[
\sum_{j=1}^n v_{jj} z_{ij} = \sum_{k=1}^n \sum_{j=1}^n z_{ik} ( 1-\delta_{kj}) q_{kj} - \sum_{k=1}^n
 \sum_{j=1}^n z_{ik} \frac{ q_{kk}v_{jj}}{v_{00}} +
 \sum_{j=1}^n v_{jj}( \frac{ ( 1-\delta_{ij}) q_{ij} }{ v_{ii}v_{jj} } - \frac{ q_{ii} }{ v_{ii} v_{00} } - \frac{ q_{jj} }{ v_{jj}v_{00} }).
\]
Thus,
\begin{eqnarray*}
&&\sum_{k=1}^n \sum_{j=1}^n z_{ik} \frac{ q_{kk}v_{jj}}{v_{00}} + \sum_{k=1}^n z_{ik} q_{kk} \\
& = & \sum_{j=1}^n v_{jj} ( \frac{ ( 1-\delta_{ij}) q_{ij} }{ v_{ii} v_{jj}} - \frac{ q_{ii}}{v_{ii} v_{00} } -\frac{ q_{jj} }{v_{jj}v_{00}} ) \\
& = & \sum_{j=1}^n \frac{ ( 1-\delta_{ij}) q_{ij}}{v_{ii}} - \frac{ q_{ii} \sum_j v_{jj}}{ v_{ii}v_{00} }
 - \sum_{j=1}^n \frac{ q_{jj} }{v_{00}} \\
& = & - \frac{q_{ii}}{v_{ii}} - \frac{ q_{ii} \sum_j v_{jj}}{v_{ii}v_{00}} = -\frac{q_{ii}}{v_{ii}v_{00}} ( v_{00} + \sum_{j=1}^n v_{jj})
\end{eqnarray*}
Thus,
\begin{eqnarray*}
- \frac{ q_{ii}}{v_{ii}v_{00}} ( v_{00} + \sum_{j=1}^n v_{jj}) & = &
 \sum_{k=1}^n z_{ik} \frac{ q_{kk} \sum_{j=1}^n v_{jj}}{ v_{00} }  + \sum_{k=1}^n z_{ik} q_{kk} \\
 & \ge & z_{i\beta} (v_{00} + \sum_{j=1}^n v_{jj}),
\end{eqnarray*}
This shows
\begin{equation}
\label{eq-zibeta-zero}
z_{i\beta} \le - \frac{ q_{ii}}{v_{ii}v_{00}} \le 0.
\end{equation}

Since $\sum_{k=1}^n q_{k\alpha} /v_{\alpha\alpha}=1$, we have
\[
\sum_{k=1}^n z_{i\alpha} \frac{ q_{k\alpha} }{ v_{\alpha\alpha} }
= \sum_{k=1}^n z_{ik} ( 1-\delta_{k\alpha}) \frac{ q_{k\alpha} }{v_{\alpha\alpha}} - \sum_{k=1}^n z_{ik} \frac{q_{kk}}{v_{00}} + y_{i\alpha}.
\]
In other words,
\[
\sum_{k=1}^n [ z_{i\alpha} - z_{ik}(1-\delta_{k\alpha}) ] \frac{ q_{k\alpha}}{v_{\alpha\alpha}}
= - \sum_{k=1}^n z_{ik} \frac{ q_{kk}}{v_{00}} + y_{i\alpha}.
\]
Analogously, we have
\[
\sum_{k=1}^n [ z_{ik}(1-\delta_{k\beta}) - z_{i\beta}  ] \frac{ q_{k\beta}}{v_{\beta\beta}}
= \sum_{k=1}^n z_{ik} \frac{ q_{kk}}{v_{00}} - y_{i\beta}.
\]
Therefore,
\begin{eqnarray}
\nonumber
y_{i\alpha}-y_{i\beta} & = & \sum_{k=1}^n \{ [ z_{i\alpha} - z_{ik}(1-\delta_{k\alpha}) ] \frac{ q_{k\alpha}}{v_{\alpha\alpha}}
+ [ z_{ik}(1-\delta_{k\beta}) - z_{i\beta}  ] \frac{ q_{k\beta}}{v_{\beta\beta}} \} \\
\label{eq-cancel}
& \ge & [\sum_{ \{k: t_{\alpha k}>0, t_{k \beta}>0\} } (z_{i\alpha} - z_{i\beta}) ] \times \frac{ m}{ M t_{\max} }.
\end{eqnarray}
The following calculations are base on the event $E_n$:
\renewcommand{\arraystretch}{1.2}
\begin{equation*}
\{ \min_{i\neq j} \xi_{ij} \ge \tfrac{1}{2} (n-1)p_n^2, ~~ t_{\min} \ge \tfrac{1}{2}nTp_n, ~~ t_{\max} \le \tfrac{3}{2}nTp_n \}
\end{equation*}
In view of \eqref{eq-zibeta-zero} and \eqref{eq-cancel}, we have
\begin{eqnarray}
\nonumber
z_{i\alpha} & \le  & z_{i\alpha} - z_{i\beta}  \le  \frac{ M t_{\max} }{m} \times  [\min_{i\neq j} \xi_{ij}]^{-1} \times \frac{ 2M }{m^2 t_{\min}^2  } \\
\nonumber
& \le &  \frac{ M\cdot \tfrac{3}{2}nTp_n  }{ m} \times \frac{1}{ \tfrac{1}{2} (n-1) p_n^2 } \times   \frac{ 2M }{m^2 (\tfrac{1}{2}nTp_n)^2  } \\
\label{ineq-zialpha-ab}
& = & \frac{ 24M^2}{ n(n-1)m^3 Tp_n^3 }
\end{eqnarray}
Note that $M=Tb_{n1}$ and $m= b_{n0}$.
By Lemma 6 in the main text, we have
\[
\P( t_{\min} \ge \tfrac{1}{2}nTp_n, t_{\max} \le \tfrac{3}{2}nTp_n ) \ge
1- 2(n+1)\exp( -\tfrac{1}{8}nTp_n).
\]
In view of inequality \eqref{eq-xii-xiij-upper},
we have
\[
P(E_n) \ge 1-  \tfrac{(n+1)n}{2}e^{-\tfrac{1}{8}(n-1)p_n} - 2(n+1) e^{-\tfrac{1}{8} nTp_n }.
\]
If $p_n \ge 24\log n/n$, then
\[
\tfrac{(n+1)n}{2}e^{-\tfrac{1}{8}(n-1)p_n} \le O(\frac{1}{n}),~~(n+1) e^{-\tfrac{1}{10} np_n }=o(\frac{1}{n^{1.4}}),
\]
such as
\[
P(E_n) \ge  1 - O(\frac{1}{n}).
\]
Let $F_n$ be the event that $V^{-1}$ exists. By Lemma 1, if $p_n \ge 24\log n/n$, then
\[
\P(F_n) \ge 1 - \exp( p_n T(n+1) ) \ge 1 - \frac{1}{n^{24}}.
\]
Therefore,
\[
\P(E_n\bigcap F_n) \ge 1 - O(\frac{1}{n}).
\]
Substituting $M=Tb_{n1}$ and $m=b_{n0}$ into \eqref{ineq-zialpha-ab}, it shows Lemma 2.
\end{proof}

\subsection{Proof of Lemma \ref{lemma:lipschitz-c}}
\label{section-lemma4}

\begin{proof}[Proof of Lemma \ref{lemma:lipschitz-c}]
Recall that $\pi_{ij}=\beta_i - \beta_j$ and
\[
H_i(\beta) =  \sum_{j\neq i} t_{ij} \mu(\pi_{ij}) - a_i, ~~ i=1, \ldots, n.
\]
The Jacobian matrix $H^\prime(\beta)$ of $H(\beta)$ can be calculated as follows.
By finding the partial derivative of $H_i$ with respect to $\beta$ for $i\neq j$, we have
\[
\frac{\partial H_i(\beta) }{ \partial \beta_j} = - t_{ij} \mu^\prime (\pi_{ij}), ~~
\frac{ \partial H_i(\beta)}{ \partial \beta_i} =  \sum_{j\neq i} t_{ij} \mu^\prime (\pi_{ij}),
\]
\[
\frac{\partial^2 H_i(\beta) }{ \partial \beta_i \partial \beta_j} = - t_{ij}\mu^{\prime\prime} (\pi_{ij}),~~
\frac{ \partial^2 H_i(\beta)}{\partial \beta_i^2} =  \sum_{j\neq i} t_{ij} \mu^{\prime\prime} (\pi_{ij}).
\]
When $\beta\in B(\beta^*, \epsilon_{n})$, by condition \eqref{ineq-mu-keyb},  we have
\[
|\frac{\partial^2 H_i(\beta) }{ \partial \beta_i \partial \beta_j}|\le b_{n2}t_{ij},~~i\neq j.
\]
Let
\[
\mathbf{g}_{ij}(\beta)=(\frac{\partial^2 H_i(\beta) }{ \partial \beta_1 \partial \beta_j}, \ldots,
\frac{\partial^2 H_i(\beta) }{ \partial \beta_n \partial \beta_j})^\top.
\]
Therefore,
\begin{equation}
\label{inequ:second:deri}
|\frac{\partial^2 H_i(\beta) }{\partial \beta_i^2} |\le t_ib_{n2},~~
|\frac{\partial^2 H_i(\beta) }{\partial \beta_j\partial \beta_i}| \le b_{n2}t_{ij}.
\end{equation}
It leads to that
$\|\mathbf{g}_{ii}(\beta)\|_1 \le 2t_ib_{n2}$.
Note that when $i\neq j$ and $k\neq i, j$,
\[
\frac{\partial^2 H_i(\beta) }{ \partial \beta_k \partial \beta_j} =0.
\]
Therefore, we have
$\|\mathbf{g}_{ij}(\beta)\|_1 \le 2t_{ij}b_{n2}$, for $j\neq i$. Consequently, for vectors $x, y\subset D$, we have
\begin{eqnarray*}
& & \max_{i=0, \ldots, n} \| H_i^\prime(x) - H_i^\prime(y)  \|_1 \\
& \le & \max_{i=0, \ldots, n}\sum_{j=1}^n | \frac{\partial H_i(x)}{\partial x_j } - \frac{\partial H_i(y)}{\partial y_j } | \\
& = & \max_{i=0, \ldots, n} \sum_{j=1}^n |\int_0^1 [\mathbf{g}_{ij}(tx+(1-t)y)]^\top (x-y)dt | \\
& \le & \max_{i=0, \ldots, n} 4b_{n2}t_i \|x-y\|_\infty \\
& = & 4b_{n2}t_{\max}  \|x - y \|_\infty.
\end{eqnarray*}
It completes the proof.
\end{proof}

\subsection{Proof of Lemma \ref{lemma-a-upper-bound}}
\label{section-Lemma5}

\begin{proof}[Proof of Lemma 5]
Recall that $t_i = \sum_{j\neq i} t_{ij}$ and $a_i$ is the number of wins
of subject $i$ out of $t_i$ comparisons. Since all comparisons are mutually independent, $a_i$ is the sum of $m_i$ independent Bernoulli
random variables given $t_{ij} = m_{ij}$, for $j = 0, \ldots, n$, where $m_i = \sum_{j\neq i} m_{ij}$.
By \citeauthor{Hoeffding:1963}'s (\citeyear{Hoeffding:1963}) inequality, we have
\begin{eqnarray*}
&&\P\left(|a_i-\E (a_{ij}|t_{ij}, j=0, \ldots, n )|\ge \sqrt{2m_i \log n}|
t_{ij}=m_{ij}, j=0, \ldots, n \right) \\
&&\P\left(|a_i-\E (a_{ij}|t_{ij}, j=0, \ldots, n )|\ge \sqrt{2m_i \log n}|
t_{ij}=m_{ij}, j=0, \ldots, n \right) \\
& \le & 2\exp{\{-\frac{2m_i\log n }{m_i}\}} =  \frac{2}{n^{2}}.
\end{eqnarray*}
where $\E(a_{ij} | t_{ij}, j=0, \ldots, n)$ is the conditional expectation given $t_{ij}$ for $0\le j \le n$.
Note that the upper bound of the above probability does not
depend on $m_{ij}$. With the law of total probability, for fixed $i$,
\begin{eqnarray*}
&&\P\left(|a_i- \sum_j \E (a_{ij}|t_{ij}, j=0, \ldots, n )|\ge \sqrt{2t_i \log n}|\right) \\
& = &
\sum_{m_{i0}=0}^T \cdots \sum_{m_{in}=0}^T \P( t_{ij}= m_{ij}, j=0, \ldots, n)\\
&&\times \P\left(|a_i- \sum_j\E (a_{ij}|t_{ij}, j=0, \ldots, n )|\ge \sqrt{2m_i \log n}|
t_{ij}=m_{ij}, j=0, \ldots, n \right) \\
&\le & 2n^{-2} \sum_{m_{i0}=0}^T \cdots \sum_{m_{in}=0}^T \P( t_{ij}= m_{ij}, j=0, \ldots, n) \\
& \le & 2\exp{\{-\frac{2m_i\log n }{m_i}\}} =  \frac{2}{n^{2}}.
\end{eqnarray*}
Therefore,
\begin{eqnarray*}
&&\P\left( \max\limits_{i=1, \ldots, n} |a_i- \sum_j \E (a_{ij}|t_{ij}, j=0, \ldots, n ) |\ge   \sqrt{2t_{\max} \log n} \right) \\
& \le & \P \left( \bigcup_{i}\left\{ |a_i-\E a_i|\ge  \sqrt{2t_i \log n}  \right\}\right ) \\
&\le & \sum_{i=1}^{n}\P\left(|a_i-\E a_i|\ge \sqrt{2t_i\log n}\right)\\
&\le & n\times\frac{1}{n^2}=\frac{1}{n}.
\end{eqnarray*}
It completes the proof.
\end{proof}

\subsection{Proof of Lemma \ref{lemma-chernoff-bound-ti}}
\label{section-lemma6}

\begin{proof}[Proof of Lemma \ref{lemma-chernoff-bound-ti}]
We first evaluate the uniform lower bound of $t_i$, $i=0, \ldots, n$.
Note that $t_i$ is the sum of $nT$ independent and identically distributed (i.i.d.)
binomial random variables, $Bin(T, p_n)$. It can be also regarded as the sum of $Tn$, i.i.d., Bernoulli random variables.
With the use of Chernoff bound (\cite{chernoff1952}), we have
\[
\P( \min_{i=0, \ldots, n} t_i < \tfrac{ T}{2} np_n ) \le \sum_{i=0}^n \P( t_i < \tfrac{T}{2}np_n) \le (n+1) \exp( - \tfrac{T}{8}np_n ).
\]
Thus, with probability at least $ 1 - (n+1) \exp( - \tfrac{T}{8}np_n )$,
\[
\min_{i=0, \ldots, n} t_i \ge \tfrac{T}{2}np_n.
\]
Analogously, with the use of Chernoff bound (\cite{chernoff1952}), we have
\[
\P( \max_{i=0, \ldots, n} t_i > \tfrac{ 3}{2} nTp_n ) \le \sum_{i=0}^n \P( t_i > \tfrac{3}{2}nTp_n) \le (n+1) \exp( - \tfrac{1}{10}nTp_n ),
\]
and
\[
\P( \tfrac{ 1}{2}\sum_i t_i > \tfrac{ 3}{2} (n+1 )nTp_n ) \le \exp(- \tfrac{1}{10}(n+1 )nTp_n ).
\]
\end{proof}

\subsection{Proof of Lemma \ref{lemma-cov-wh}}
\label{section-lemma7}

\begin{proof}[Proof of Lemma \ref{lemma-cov-wh}]  
Write
\[
\begin{array}{c}
H=H(\beta^*), ~~V^{-1}=H^\prime(\beta^*), \\
\E^*(\cdot)=\E(\cdot|t_{ij}, 0\le i,j\le n),
~~\mathrm{Cov}^*(\cdot)=\mathrm{Cov}(\cdot|t_{ij}, 0\le i,j\le n).
\end{array}
\]
Then,
\[
H=\E^*(a) - a.
\]
Let $W=V^{-1}-S$. Note that $U=\mathrm{Cov}^*(H)$. By direct calculations, we have
\[
\mathrm{Cov}^*(WH)  = (V^{-1} -S ) U (V^{-1} -S ) := I_1 + I_2
\]
where $I_1=V^{-1} -S  + SVS - S$ and $I_2 = (V^{-1} -S ) (U-V) (V^{-1} -S )$.
It is easy to verify
\[
(SVS-S)_{ij} = \frac{ v_{i0} }{v_{ii}v_{00}} + \frac{ v_{0j}}{v_{jj}v_{00} } - \frac{ ( 1-\delta_{ij})v_{ij}}{v_{ii}v_{jj}}.
\]
Therefore,
\[
\max_{i,j}|(SVS-S)_{ij} | \le \frac{ 3 b_{n1}}{ t_{\min}^2 b_{n0}^2 }.
\]
By Lemmas 3 and 5 in the main text, we have
\[
I_1 = O_p( \frac{ b_{n1}^2}{n^2b_{n0}^3 p_n^5 } ).
\]
Now, we evaluate $I_2$. Direct calculations give
\begin{align*}
    &~~ [(V^{-1} -S ) (U-V) (V^{-1} -S ) ]_{ij} \\
=   &~~ \sum_{k,s}  (V^{-1} -S )_{ik} (U-V)_{ks} (V^{-1} -S ) ]_{sj} \\
=   &~~  O( (\frac{ b_{n1}^2 }{ n^2 b_{n0}^3 p_n \eta_n })^2 ) \sum_{k,s} |(U-V)_{ks}| \\
= &~~ O( (\frac{ b_{n1}^2 }{ n^2 b_{n0}^3 p_n \eta_n })^2 ) \times 2(b_{n1}+1/4)\sum_i t_i \\
=   &~~ O( \frac{ b_{n1}^5 }{ n^2 b_{n0}^6 p_n^5 } ).
\end{align*}
where the second inequality is due to Lemma 3 and the third inequality is due to that $p_{ij}(1-p_{ij}) \le 1/4$ and $\mu^\prime_{ij}(\beta) \le b_{n1}$.
Therefore, we have
\[
I_1 + I_2 = O_p( \frac{ b_{n1}^5 }{ n^2 b_{n0}^6 p_n^5 } ).
\]
\end{proof}

\setlength{\itemsep}{-1.5pt}
\setlength{\bibsep}{0ex}
\bibliography{reference3}
\bibliographystyle{apalike}

\end{document}